\documentclass[journal,12pt,onecolumn,draftclsnofoot,twoside]{IEEEtran}
\usepackage[T1]{fontenc}
\usepackage{amsmath,amssymb,mathtools}
\usepackage{microtype}
\usepackage{cite}
\usepackage[hidelinks]{hyperref}
\newtheorem{theorem}{Theorem}
\newtheorem{proposition}[theorem]{Proposition}
\newtheorem{lemma}[theorem]{Lemma}
\newtheorem{corollary}[theorem]{Corollary}
\newtheorem{remark}[theorem]{Remark}
\newcommand{\R}{\mathbb R}
\newcommand{\E}{\mathbb E}
\newcommand{\one}{\mathbf 1}
\newcommand{\dd}{\,\mathrm d}
\newcommand{\KL}{D_{\mathrm{KL}}}
\DeclareMathOperator{\intt}{int}
\DeclareMathOperator{\dom}{dom}
\DeclareMathOperator{\epi}{epi}
\DeclareMathOperator{\GL}{GL}
\allowdisplaybreaks[2]
\hypersetup{pdftitle={Sharp Rogers--Shephard Inequalities for R\'enyi Entropy}, 
 pdfauthor={Bang-Xian Han, Guoqing Zhan, Chiyu Zhou, and Zhuo-Nan Zhu},pdfsubject={Log-concave densities, R\'enyi differential entropy, sharp constants and equality cases}}
\title{Sharp Rogers--Shephard Inequalities\\for R\'enyi Entropy}
\author{Bang-Xian~Han, Guoqing~Zhan, Chiyu~Zhou, and Zhuo-Nan~Zhu%
\thanks{Bang-Xian Han is with the School of Mathematics, 
Shandong University, Jinan 250100, China
(e-mail: \mbox{hanbx@sdu.edu.cn}).}%
\thanks{Guoqing Zhan is with the School of Mathematical Sciences,
University of Science and Technology of China, Hefei 230026, China
(e-mail: \mbox{zhanguoqing@mail.ustc.edu.cn}).}%
\thanks{Chiyu Zhou is with the School of Mathematical Sciences,
University of Science and Technology of China, Hefei 230026, China
(e-mail: \mbox{dovong@mail.ustc.edu.cn}).}%
\thanks{Zhuo-Nan Zhu is with the School of Mathematical Sciences, 
University of Science and Technology of China, Hefei 230026, China
(e-mail: \mbox{zhuonanzhu@mail.ustc.edu.cn}).}}
\begin{document}
\maketitle
\begin{abstract}
We establish sharp weighted Rogers--Shephard inequalities for
R\'enyi differential entropy. For independent copies $X,X'$ of a log-concave random vector in $\R^n$, we prove $h_p(aX-bX')\le h_p(X)+n\log(a+b)$ for all $a,b>0$ and $1\le p\le\infty$, where orders $1$ and $\infty$ correspond to Shannon entropy and min-entropy, respectively. The Shannon case settles the entropic
Rogers--Shephard conjecture of Madiman and Kontoyiannis and its
higher-dimensional formulation. For $1\le p<\infty$, equality holds
precisely for invertible affine images of random vectors with
independent exponential coordinates. At $p=\infty$, the larger equality
class is characterized by possibly extended-valued convex potentials
that are positively homogeneous of degree one. For every $0<p<1$, uniform
laws on sufficiently high-dimensional simplices violate the unweighted
bound. Thus $p=1$ is the critical order for this bound to hold in
every dimension. The proof combines weighted convex intersections
with an exponential lift to a log-affine probability density. At the Shannon
endpoint, this lift yields an exact deficit decomposition with a
conditional mutual information term for an additive channel with
asymmetric Laplace noise. 
\end{abstract}

\begin{IEEEkeywords}
Conditional mutual information, log-concavity, R\'enyi entropy,
Rogers--Shephard inequality, Ruzsa divergence 
\end{IEEEkeywords}

\section{Introduction}\label{sec:introduction}
\IEEEPARstart{W}{e} study entropy growth when an independent copy of a random
vector is subtracted from it. For a random vector $X$ with density
$f$ on $\R^n$, its Shannon differential entropy is
\[
 h(X)=h(f)=-\int_{\R^n}f(x)\log f(x)\dd x.
\]
Densities and integrals without an indicated measure are with
respect to Lebesgue measure. All logarithms are natural and we
use the convention $0\log 0=0$. This entropy has the opposite sign
to the functional $\int f\log f$ commonly used in optimal transport.

We ask for the sharp upper bound on $h(X-X')-h(X)$ when $X'$ is
an independent copy of a log-concave random vector $X$. In the
continuous setting, log-concavity means that $X$ has a log-concave
Lebesgue density; its support has nonempty interior. For a uniform
law on a convex body, the classical Rogers--Shephard inequality
\cite{RS} bounds the volume of the support of the difference.
Entropy depends on the entire convolution density, so a sharp
entropy bound requires more than control of the support.
This upper-bound problem complements the lower bounds supplied by
the entropy power inequality \cite{Shannon,Stam,Blachman}. We determine
the sharp upper bound under log-concavity and characterize all
laws attaining it.

The problem also has an additive-combinatorial background in
Ruzsa's sumset calculus and the study of additive representation
functions and additive energy. Inequalities for sumset and
difference-set sizes have entropy counterparts for independent
sums and differences. Madiman and Kontoyiannis \cite{MK} developed
this connection through the Ruzsa divergence for random vectors
and, more generally, random variables on locally compact Polish
abelian groups. More recently, Li, Gavalakis and Kontoyiannis
\cite{LGK} introduced entropic additive energy for continuous
random variables and established entropy inequalities for sums,
products, and sum-product combinations. This representation
viewpoint directs attention to the full difference distribution
and its entropy. We write $I(U;W)$ for mutual information:
if $U,W$ have joint
density $f_{U,W}$ and marginal densities $f_U,f_W$, then
\[
 I(U;W)=\int f_{U,W}(u,w)
 \log\frac{f_{U,W}(u,w)}{f_U(u)f_W(w)}\dd u\dd w,
\]
with zero integrand wherever $f_{U,W}=0$. For a log-concave random
vector $X$ and an independent copy $X'$, the Ruzsa divergence of
$X$ from itself is
\begin{equation}\label{eq:Ruzsa-definition}
 d_R(X\Vert X)=h(X-X')-h(X)=I(X-X';X'), 
\end{equation}
where the last equality in 
\eqref{eq:Ruzsa-definition} follows by conditioning on $X'$ and
using translation invariance of differential entropy.
The entropy power inequality yields
$d_R(X\Vert X)\ge(n/2)\log 2$.

Madiman and Kontoyiannis \cite[Corollaries 9 and 10]{MK} proved
$d_R(X\Vert X)\le2n\log 2$ for log-concave laws and conjectured
the sharp constant $\log 2$ in dimension one, with one-sided
exponentials as the only extremizers \cite[Conjecture 1]{MK}.
Melbourne and Tkocz \cite[Section VI]{MT} formulated the
dimension-$n$ conjecture and proved its R\'enyi counterpart for
orders $p\ge2$.

\subsection{The Shannon and Ruzsa bounds}

\begin{theorem}[Sharp Shannon bound]\label{thm:Shannon}
Let $X,X'$ be independent random vectors with the same log-concave
probability density on $\R^n$. Then
\begin{equation}\label{eq:Shannon}
 \frac n2\log 2\le h(X-X')-h(X)\le n\log 2.
\end{equation}
Equality in the lower bound holds if and only if $X$ is a
nondegenerate Gaussian random vector.
Equality in the upper bound holds if and only if $X$ is an
invertible affine image of a random vector with independent
one-sided exponential coordinates. 
Equivalently, $2\le N(X-X')/N(X)\le4$, where
$N(X)=(2\pi e)^{-1}e^{2h(X)/n}$ is the entropy power.
\end{theorem}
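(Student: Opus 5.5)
The two inequalities need different tools. The lower bound follows directly from the entropy power inequality; the upper bound is where the work lies, and I plan to get it from an exponential lift to one higher dimension.

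\emph{Lower bound.} Since $X'$ is independent of $X$ and $-X'$ has the same entropy as $X$, the entropy power inequality gives
\[
N(X-X')\ge N(X)+N(-X')=2N(X).
\]
This is exactly $h(X-X')-h(X)\ge\frac n2\log 2$. Equality in the entropy power inequality forces $X$ and $-X'$ to be Gaussian with proportional covariances. A Gaussian $X$ and $-X'$ have the same covariance, so equality holds exactly for nondegenerate Gaussians, and conversely every nondegenerate Gaussian attains it.

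\emph{Upper bound: the lift.} Write $f=e^{-V}$ with $V$ convex, and let $K=\epi V\subset\R^{n+1}$, a closed convex set. On $K$ put the log-affine probability density $\one_K(x,t)e^{-t}$. Its $x$-marginal is $\int_{V(x)}^\infty e^{-t}\dd t=f(x)$. So $X$ lifts to $\tilde X=(X,T)$ with $T=V(X)+E$, where $E$ is a standard exponential variable independent of $X$. Similarly let $\tilde X'$ be an independent lift of $X'$.

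In the lift, the difference density at $(z,s)$ is
\[
\int_{K\cap(K+(z,s))}e^{-t}e^{-(t-s)}\dd x\dd t .
\]
This is a weighted volume of the convex intersection $K\cap(K+(z,s))$, and the weight is log-affine. For such weights a Rogers--Shephard-type argument is available. First, the weighted volume of $K\cap(K+w)$ is $(1/(n+1))$-concave in $w$ on $K-K$, by Brunn--Minkowski for the log-affine weight. Second, comparing this concave function with the cone over $K-K$ gives the classical lower bound on intersection volumes. Integrating that bound against $\log$ yields an entropy inequality for $\tilde X-\tilde X'$ relative to $\tilde X$.

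\emph{Upper bound: projecting back.} Next I project back to $\R^n$ using the chain rule:
\[
h(\tilde X-\tilde X')=h(X-X')+h(T-T'\mid X-X'),\qquad h(\tilde X)=h(X)+h(T\mid X)=h(X)+1.
\]
The conditional term $h(T-T'\mid X-X')$ is compared with the entropy of an asymmetric Laplace variable. Conditionally on $(X,X')$, the variable $T-T'$ equals $V(X)-V(X')+(E-E')$. This is an additive channel with Laplace noise, which gives
\[
h(T-T'\mid X-X')=h(E-E')+I\bigl(T-T';V(X)-V(X')\bigm|X-X'\bigr).
\]
My plan is to assemble these identities into the exact decomposition
\[
n\log2-\bigl(h(X-X')-h(X)\bigr)=\Delta_{\mathrm{geom}}+I(\cdot\,;\cdot\mid X-X').
\]
Here $\Delta_{\mathrm{geom}}\ge0$ is the deficit from the weighted intersection inequality, and the second term is a conditional mutual information, hence also nonnegative. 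This gives the upper bound.

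\emph{Main obstacle.} I expect the hardest step to be making the intersection estimate sharp with the constant $2^n$ rather than a dimensional loss. A naive Rogers--Shephard bound in $\R^{n+1}$ gives $\binom{2n+2}{n+1}$-type constants. The log-affine weight has to be used to show that the relevant concave profile of the weighted volume $w\mapsto$ weighted volume of $K\cap(K+w)$ is dominated by the profile coming from the exponential (orthant) model. My first attempt is to reduce to the case where $K$ is a cone: the bound should be homogeneous, and the conical case is where the computation becomes explicit. The constants $h(E-E')$, $1$ and the orthant computation must then combine to exactly $n\log 2$.

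\emph{Equality.} Equality forces both deficits to vanish. Vanishing of the conditional mutual information should force $V$ to be affine along the supports of the difference fibres. Vanishing of the geometric deficit should force $K-K$ to behave like a simplicial cone, that is, $V$ is linear on a simplicial cone with apex at a point. This is exactly an invertible affine image of independent one-sided exponentials. Conversely, a direct computation in dimension one gives $h(E-E')-h(E)=\log 2$, which tensorizes across independent coordinates to $n\log2$.
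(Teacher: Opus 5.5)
\textbf{There is a genuine gap: the sharp log-affine step is missing.} Your lower bound and your lift match the paper. Your chain-rule bookkeeping, with $h(\tilde X)=h(X)+1$ and $h(E-E')=1+\log2$, reproduces the deficit decomposition of Theorem~\ref{thm:information}. But that decomposition only relocates the difficulty. Everything rests on $\Delta_{\mathrm{geom}}\ge0$, i.e.\ on the sharp bound $h(\tilde X-\tilde X')\le h(\tilde X)+(n+1)\log 2$ for a log-affine density on an arbitrary closed convex set in $\R^{n+1}$. That set is an unbounded epigraph, not a cone, and the route you propose for it fails.

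With a log-affine weight, $w\mapsto\int_{K\cap(K+w)}e^{-t}e^{-(t-s)}$ is only log-concave (by Pr\'ekopa--Leindler), not $(1/(n+1))$-concave. Already for the lift of the one-sided exponential ($n=1$, $K=\{(x,t):0\le x\le t\}$), your formula gives $\tfrac14e^{-|z|-|s-z|}$ on $K-K=\R^2$. This is not $\tfrac12$-concave, since a positive concave function on $\R^2$ is constant. For unbounded $K$, the set $K-K$ is unbounded and often all of $\R^{n+1}$, so there is no useful cone over $K-K$ to compare with. A Rogers--Shephard comparison would in any case produce binomial constants. You also give no mechanism reducing a general epigraph to a cone. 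In the paper, cones arise only as the equality case, never as a reduction.

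\textbf{The missing idea.} Interpolate in a dilation parameter rather than in the translation vector. With $a+b=1$, $r_t=b+at$ and $q_t=a+bt$, the sets $C_d(t)=\{x:bx+td\in r_tC,\ ax-td\in q_tC\}$ have weighted masses $H_d(t)$ that are log-concave in $t$, with $H_d(0)=Z$ and $H_d(1)=Z^2g(d)$. The chain is then:
\begin{itemize}
\item log-concavity gives $H_d(p)\le Z^{1-p}H_d(1)^p$;
\item integrating over $d$ produces the exact mass $(r_pq_p/p)^nZ(r_p)Z(q_p)$;
\item concavity of $u\mapsto\log(u^nZ(u))$ gives $\int g^p\ge\int f_C^p$ for every $p>1$ (Proposition~\ref{prop:log-affine-moment});
\item together with your lift and the Jensen bound $\int(k*\mu_d)^p\le1/p$, this yields $\int g^p\ge\int f^p$, and the Shannon bound follows as $p\downarrow1$.
\end{itemize}

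\textbf{The equality sketch.} It inherits the same gap and also targets the wrong object. $K-K=\R^{n+1}$ for every full-dimensional cone, so it is the support $C$ that must be shown to be a translated simplicial cone. The paper does this in three steps:
\begin{itemize}
\item it obtains $\lambda C=C+v$ with $\lambda>1$ from Dubuc's equality theorem;
\item it derives simpliciality from the principal-intersection property via Riesz decomposition (Lemma~\ref{lem:simplicial});
\item at $p=1$ it uses the retained concavity gap of Proposition~\ref{prop:Shannon-gap}, because equality information is lost in the limit $p\downarrow1$.
\end{itemize}
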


The lower bound and its equality cases follow from the entropy
power inequality, while the upper bound settles the conjecture
discussed above. In terms of Ruzsa divergence, this gives the
following corollary. 

\begin{corollary}[Ruzsa divergence]\label{cor:Ruzsa}
Under the assumptions of Theorem~\ref{thm:Shannon},
\begin{equation}\label{eq:Ruzsa-bound}
	\frac n2\log 2\le d_R(X\Vert X)=I(X-X';X')\le n\log 2,
\end{equation}
with the same equality cases as in Theorem~\ref{thm:Shannon}. 
The difference constant of \cite[Definition 4]{MK} therefore satisfies
\begin{equation}\label{eq:difference-constant}
 1\le\sigma_-(X):=\frac{N(X-X')}{2N(X)}\le2.
\end{equation}
\end{corollary}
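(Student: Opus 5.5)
Corollary~\ref{cor:Ruzsa} is a direct translation of Theorem~\ref{thm:Shannon}, so I would derive it from that theorem, the identity \eqref{eq:Ruzsa-definition}, and the definition of entropy power.

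First I would justify the middle equality $d_R(X\Vert X)=I(X-X';X')$. Write $Z=X-X'$. Since $X$ is log-concave, $h(X)$ is finite, and $Z$ is log-concave as well (convolution of log-concave densities, by Pr\'ekopa--Leindler), so $h(Z)$ is finite. Conditionally on $X'=x'$, $Z$ has the density $f(\cdot+x')$, and translation invariance gives $h(Z\mid X')=h(X)$. Hence $I(Z;X')=h(Z)-h(Z\mid X')=h(X-X')-h(X)$. Here all terms are finite, and the joint density of $(Z,X')$ is $f(z+x')f(x')$, so the chain rule applies without $\infty-\infty$ issues.

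Next, \eqref{eq:Ruzsa-bound} is exactly \eqref{eq:Shannon} rewritten via this identity. The equality cases transfer verbatim because the quantities coincide: Gaussians for the lower bound, and affine images of independent one-sided exponentials for the upper bound.

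Finally, for \eqref{eq:difference-constant} I would compute
\[
 \frac{N(X-X')}{N(X)}=\exp\Bigl(\frac 2n\bigl(h(X-X')-h(X)\bigr)\Bigr),
\]
and exponentiate \eqref{eq:Ruzsa-bound}. This gives $2\le N(X-X')/N(X)\le 4$, which is the ``equivalently'' clause of Theorem~\ref{thm:Shannon}. Dividing by $2$ yields $1\le\sigma_-(X)\le 2$, and monotonicity of the exponential preserves the equality cases.

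The only point needing care is the finiteness of the entropies in the first step, which is standard under log-concavity. I do not expect a real obstacle, since all the substance lies in Theorem~\ref{thm:Shannon}.
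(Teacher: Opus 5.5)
Your proposal is correct and matches the paper's argument: the paper also reads the corollary off Theorem~\ref{thm:Shannon} via the identity \eqref{eq:Ruzsa-definition}, obtained by conditioning on $X'$ and using translation invariance, and \eqref{eq:difference-constant} is the exponentiated form $2\le N(X-X')/N(X)\le 4$ divided by $2$. Your explicit finiteness check is what Lemma~\ref{lem:integrability} supplies in the paper.
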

This sharpens the upper bound on $d_R(X\Vert X)$ in \cite[Corollary 10]{MK}. 

\subsection{The full R\'enyi range and equality cases}
For a probability density $f$ on $\R^n$, the R\'enyi
differential entropy \cite{Renyi} of order
$p\in(0,1)\cup(1,\infty)$, together with its Shannon
and min-entropy limits, is given by
\begin{equation}\label{eq:Renyi-definition}
	\begin{aligned}
		h_p(f)
		= \frac{1}{1-p}\log\int_{\R^n} f(x)^p\dd x,\qquad h_1(f)
		= h(f), \qquad
		h_\infty(f)=-\log\|f\|_\infty.
	\end{aligned}
\end{equation} 
For a random vector $X$ with density $f$, we also write
$h_p(X)=h_p(f)$. We call $h_\infty$ the min-entropy. Except in the discrete discussion in
Section~\ref{sec:open-sums-discrete}, entropy means differential entropy.
Melbourne and Tkocz \cite[Theorem VI.1]{MT} proved
\begin{equation}\label{eq:known-range}
 h_p(X-X')\le h_p(X)+n\log 2,\qquad 2\le p\le\infty.
\end{equation}
Their argument combines the identity
\begin{equation}\label{eq:autocorrelation}
	h_\infty(X-X')=h_2(X)
\end{equation}
with sharp comparisons between R\'enyi entropies of log-concave
densities \cite{BM11,FMW,FLM}. For integrable functions $u,v$,
write $(u*v)(x)=\int u(x-y)v(y)\dd y$ for their convolution,
and let $\widetilde f(x)=f(-x)$ denote the reflection of $f$. By the Cauchy--Schwarz inequality, 
$(f*\widetilde f)(0)=\int f^2=\|f*\widetilde f\|_\infty$,
which proves \eqref{eq:autocorrelation}. 

For $0<p<2$, the same comparison method bounds the entropy increment
by $n\log p/(p-1)$, interpreted as $n$ at $p=1$. This Shannon bound
is due to Bobkov and Madiman \cite{BM13} and also follows from
\cite{BM11}. In dimension one, Melbourne, Nayar and Roberto
\cite[Corollary 2.3]{MNR} obtain the stronger bound
$h(X-X')-h(X)\le\frac12\log(4\pi/e)$ from their sharp lower
bound on entropy at fixed variance. The remaining problem is the
sharp bound for $1\le p<2$, including the Shannon endpoint.
We prove it in every dimension and allow arbitrary positive weights.

\begin{theorem}[Weighted R\'enyi bound]\label{thm:main}
Let $X,X'$ be independent random vectors with the same log-concave
probability density on $\R^n$. Then, for every $a,b>0$ and
$1\le p\le\infty$, 
\begin{equation}\label{eq:main}
 h_p(aX-bX')\le h_p(X)+n\log(a+b).
\end{equation}
All entropies in \eqref{eq:main} are finite. The constant is optimal
for each choice of $n,p,a,b$.
\end{theorem}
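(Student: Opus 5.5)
The plan is to reduce all orders $1\le p\le\infty$ at once to a single family of \emph{majorization} inequalities, and to prove those by an exponential lift combined with a weighted Rogers--Shephard estimate for intersections of convex sets.

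\textbf{Reduction to majorization.} Write $f=e^{-V}$ with $V$ convex, and let $g$ be the density of $Z=aX-bX'$. By Pr\'ekopa's theorem $g$ is log-concave. Since log-concave densities are bounded and have exponentially decaying tails, all $h_p$ in \eqref{eq:main} are finite. Let
\[
 f_{a+b}(z)=(a+b)^{-n}f\bigl(z/(a+b)\bigr)
\]
be the density of $(a+b)X$. Because $h_p(f_{a+b})=h_p(f)+n\log(a+b)$, inequality \eqref{eq:main} for $1<p<\infty$ is equivalent to $\int g^p\ge\int f_{a+b}^p$. I will derive this, for all $p\ge1$ simultaneously, from the stronger statement
\begin{equation}\label{eq:plan-major}
 \int_{\R^n}(g-s)_+\ge\int_{\R^n}(f_{a+b}-s)_+\qquad\text{for all }s>0,
\end{equation}
that is, $g$ majorizes $f_{a+b}$ in the Hardy--Littlewood--P\'olya sense. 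Both are probability densities, so \eqref{eq:plan-major} gives $\int\varphi(g)\ge\int\varphi(f_{a+b})$ for every convex $\varphi$ with $\varphi(0)=0$ (write $\varphi$ as a mixture of the functions $(t-s)_+$ plus a linear part). Taking $\varphi(t)=t^p$ gives the case $1<p<\infty$. Taking $\varphi(t)=t\log t$ gives $p=1$, and the limit $s\uparrow\|f_{a+b}\|_\infty$ in \eqref{eq:plan-major} gives $p=\infty$. This also explains why $p=1$ should be the critical order.

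\textbf{Exponential lift.} Write $f(x)=\int_{V(x)}^\infty e^{-t}\dd t$ and $K_t=\{V\le t\}$, which are convex sets. This realizes $f$ as the $x$-marginal of the log-affine density $e^{-t}$ on the convex epigraph $\epi V\subset\R^{n+1}$. Then
\[
 g(z)=a^{-n}\iint e^{-s-t}\,\bigl|K_t\cap b^{-1}(aK_s-z)\bigr|\dd s\dd t .
\]
The task is thus reduced to estimates on the weighted intersection function
\[
 \phi_{s,t}(z)=\bigl|K_t\cap b^{-1}(aK_s-z)\bigr|,
\]
which is supported on $aK_s-bK_t$. By Brunn--Minkowski, $\phi_{s,t}^{1/n}$ is concave on its support. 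It satisfies $\int\phi_{s,t}=b^{-n}\cdot$(an explicit product of volumes), and its maximum is at least $|K_t\cap (a/b)K_s|$ up to translation.

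Following the Rogers--Shephard argument, concavity lets me compare $\phi_{s,t}$ from below with a cone-shaped profile over the set $aK_s-bK_t$. Moreover, $aK_s-bK_t\subseteq(a+b)K_{\max(s,t)}$ after recentring, which is the weighted convexity input. Integrating these cone lower bounds against $e^{-s-t}$ should produce a function whose distribution function dominates that of $f_{a+b}$ level by level. The lift is what makes this comparison exact: when $V$ is the potential of independent exponentials, $e^{-t}\one_{\epi V}$ is a product of exponentials on a cone, and all the inequalities become equalities.

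\textbf{Optimality and main obstacle.} Optimality of the constant will be checked on $X$ with i.i.d.\ $\mathrm{Exp}(1)$ coordinates, where $aX_i-bX_i'$ is asymmetric Laplace and a direct computation gives equality in \eqref{eq:main} for every $p$.

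I expect the main obstacle to be \eqref{eq:plan-major} itself. The pointwise cone bound on $\phi_{s,t}$ is easy, but converting it into a level-set comparison requires controlling $|\{g>\lambda\}|$ against $|\{f_{a+b}>\lambda\}|=(a+b)^n|K_{-\log((a+b)^n\lambda)}|$ uniformly in $\lambda$. My first attempt is to prove, for each fixed pair $(s,t)$ and each $\lambda$, the layer-cake inequality $\int(\phi_{s,t}-\lambda)_+\ge\int(\psi_{s,t}-\lambda)_+$, where $\psi_{s,t}$ is the exact cone profile, and then integrate over $(s,t)$. Since $(\cdot-\lambda)_+$ is convex, exchanging it with the $(s,t)$-integral goes the wrong way under Jensen. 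If that step fails, I would instead work in the $(n+1)$-dimensional lifted space: there I would apply the intersection argument to the single convex set $\epi V$ with the log-affine weight, and only at the end take the marginal to recover $g$.
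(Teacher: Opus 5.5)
\textbf{The gap.} Your central reduction is to a statement that is false for every $n\ge2$ and all $a,b>0$. Because $\int g=\int f_{a+b}=1$, the majorization $\int(g-s)_+\ge\int(f_{a+b}-s)_+$ for all $s>0$ is equivalent to $\int\min(g,s)\le\int\min(f_{a+b},s)$. Dividing by $s$ and letting $s\downarrow0$ gives $|\{g>0\}|\le|\{f_{a+b}>0\}|$. For $X$ uniform on a convex body $K$ this reads $|aK-bK|\le(a+b)^n|K|$. Brunn--Minkowski gives the opposite inequality $|aK-bK|\ge(a+b)^n|K|$, with equality only when $K$ is centrally symmetric. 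For a triangle and $a=b=1$, for instance, $|K-K|=6|K|>4|K|=|2K|$.

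The same observation shows that your containment $aK_s-bK_t\subseteq(a+b)K_{\max(s,t)}$ (up to translation) fails whenever the level sets are not symmetric. The cone-profile comparison therefore has no valid input.

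Worse, majorization would also yield, via $t^p=p(1-p)\int_0^\infty\min(t,s)\,s^{p-2}\dd s$, the inequality $\int g^p\le\int f_{a+b}^p$, and hence \eqref{eq:main} for every $0<p<1$. Theorem~\ref{thm:critical} refutes this. So a level-by-level comparison does not explain why $p=1$ is critical; it would abolish the critical order. Any correct argument must use $p\ge1$ in an essential way. Your fallback does not escape this either: a uniform density on a simplex is already log-affine, so majorization fails in the log-affine class too.

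\textbf{What works instead.} The paper compares $p$th-power integrals at a fixed $p>1$, after normalizing to $a+b=1$.

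For $f_C=Z^{-1}e^{-\ell}\one_C$ it uses the weighted intersections $H_d(t)=\int_{C_d(t)}e^{-\ell}$, where $C_d(t)=\{x:bx+td\in(b+at)C,\ ax-td\in(a+bt)C\}$. These are log-concave in $t$ by Pr\'ekopa--Leindler, with $H_d(0)=Z$ and $H_d(1)=Z^2g(d)$. Extrapolating past $t=1$ gives $H_d(p)\le Z^{1-p}H_d(1)^p$, which is valid only because $p>1$. Integrating in $d$, using the explicit value of $\int H_d(p)\dd d$ and the concavity of $u\mapsto\log(u^nZ(u))$, then yields $\int g^p\ge\int f_C^p$.

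For general $f=e^{-V}$ your lift is the right device, but returning from $\R^{n+1}$ is not simply taking a marginal. One has $aY-bY'=(D,B+L)$, where $D=aX-bX'$, $B=aV(X)-bV(X')$, and $L$ is asymmetric Laplace and independent of $(D,B)$. The conditional density of the last coordinate given $D=d$ is then $k*\mu_d$. Jensen's inequality for $z\mapsto z^p$, which again needs $p\ge1$, gives $\int G^p\le\frac1p\int g^p$. Combined with $\int F^p=\frac1p\int f^p$ and the log-affine case in dimension $n+1$, this gives $\int g^p\ge\int f^p$. The endpoints $p=1$ and $p=\infty$ then follow by limits.

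Your sharpness check with exponential products is correct.
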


In particular, taking $a=b=1/2$, we obtain 
\[
h_p\left(\frac{X-X'}{2}\right)\le h_p(X),
\qquad 1\le p\le\infty, 
\]
which gives a symmetrization that preserves log-concavity and
does not increase R\'enyi entropy in this range. For $1\le p<\infty$, the equality cases
in \eqref{eq:main} are characterized as follows. 

\begin{theorem}[Finite-order equality]\label{thm:equality}
Under the assumptions of Theorem~\ref{thm:main}, fix $a,b>0$ and
$1\le p<\infty$. Equality holds in \eqref{eq:main} if and only if
\begin{equation}\label{eq:extremizers}
 \begin{gathered}
 X\stackrel{\mathrm d}=c+T(E_1,\ldots,E_n),\qquad 
 c\in\R^n,\qquad T\in\GL(n,\R),
 \end{gathered}
\end{equation}
where $\stackrel{\mathrm d}=$ denotes equality in distribution,
$\GL(n,\R)$ is the set of invertible real $n\times n$ matrices,
and $E_1,\ldots,E_n$ are independent standard exponential random
variables. 
\end{theorem}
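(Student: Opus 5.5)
The plan has two directions: check that the exponential laws attain equality, then extract that structure from the equality case in the proof of Theorem~\ref{thm:main}.

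\emph{Sufficiency.} Both sides of \eqref{eq:main} change in the same way under $X\mapsto c+TX$. The translation cancels in $aX-bX'$ up to a shift, and $T$ adds $\log|\det T|$ to both sides. So it suffices to take $X=(E_1,\dots,E_n)$. Then $aX-bX'$ has independent coordinates $aE_i-bE_i'$, and each of these has the asymmetric Laplace density
\[
 g(t)=\frac{1}{a+b}\bigl(e^{-t/a}\one_{t>0}+e^{t/b}\one_{t<0}\bigr).
\]
A direct computation gives $\int g^p=(a+b)^{-p}(a+b)/p=(a+b)^{1-p}/p$. Since $\int e^{-px}\one_{x>0}\dd x=1/p$, we get $h_p(g)=\log(a+b)+h_p(E_1)$. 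The Shannon case follows by letting $p\to1$. Tensorizing over the $n$ coordinates gives equality in \eqref{eq:main}.

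\emph{Necessity.} I would retrace the proof of Theorem~\ref{thm:main}, which uses weighted convex intersections and an exponential lift to a log-affine density, and record where equality forces rigidity. Write $f=e^{-V}$ with $V$ convex. The density of $aX-bX'$ at $z$ is an integral over the fiber $\{(x,y):ax-by=z\}$ of $e^{-V(x)-V(y)}$. The key pointwise bound should compare this with the lift, in which $V$ is replaced along the fiber by a supporting affine minorant. Equality in \eqref{eq:main} for finite $p$ then forces equality in this pointwise convexity inequality for almost every relevant pair. Because $p<\infty$, the inequality is integrated against a strictly positive weight, unlike the sup-norm case at $p=\infty$, so equality holds on a full-measure set. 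The conclusion I aim for is the following: on the interior of the support, $V$ agrees with an affine function, and the support is a convex cone with apex $c$. If instead the support had curvature, or $V$ had strict convexity anywhere, an open set of fibers would have a strict gap.

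Next I would show that the cone must be simplicial, that is, the image of the orthant under some $T$. This step is the analogue of the equality case of the classical Rogers--Shephard inequality, where simplices are characterized. That characterization comes from requiring the sections $K\cap(z+K)$ to be homothetic copies of $K$ for all $z$. Here the analogous requirement is that the weighted intersections $(K+ \cdot)\cap(\cdot)$ be translates of a fixed scaled cone. For a cone $K$ with apex $0$ this is the condition that $K\cap(z+K)$ is a translate of $K$ for every $z\in K-K$, which holds exactly when $K$ is simplicial; this is a known characterization of simplex cones via lattice-type intersection. It remains to pin down the affine potential. Integrability on the cone forces $V(x)=\langle\ell,x-c\rangle+\mathrm{const}$ with $\ell$ strictly positive on $K\setminus\{0\}$. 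Changing variables to the edge coordinates of $K$ turns $e^{-V}$ into a product of exponentials, which gives \eqref{eq:extremizers} after absorbing the scales into $T$.

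\emph{Main obstacle.} I expect the hardest step to be converting equality in the integrated inequality into the exact cone-plus-affine structure, especially at $p=1$. In the Shannon case the deficit is a conditional mutual information term, and equality means this term vanishes. I would then show that vanishing conditional mutual information for the asymmetric Laplace channel forces the relevant conditional laws to factor. My first attempt would be to deduce from this factorization a functional equation for $V$ of Cauchy type, $V(x)+V(y)=\Phi(ax-by)+\Psi(\cdot)$, valid almost everywhere. Solving it, using the measurability and convexity of $V$, gives affinity on cones. The simplicial step would then follow as described above.
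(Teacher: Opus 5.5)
Your sufficiency argument is correct and matches the paper's. The necessity part, however, misidentifies where rigidity comes from. The proof of Theorem~\ref{thm:main} contains no pointwise convexity inequality with a supporting affine minorant. Its only lossy steps are the Jensen bound $\int(k*\mu_d)^p\le\int k^p$ for the lift and the Pr\'ekopa--Leindler bounds in the log-affine case.

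Equality in the Jensen step forces the conditional law $\mu_d$ of $B=aV(X)-bV(X')$ given $D=aX-bX'$ to be a point mass. This uses strict convexity of $z\mapsto z^p$ and the fact that distinct translates of the asymmetric Laplace density differ. At $p=1$, vanishing of $I(B;B+L\mid D)$ gives the same conclusion via $\E[B+L\mid B,D]=\E[B+L\mid D]$. The resulting equation is $aV(x)-bV(y)=\varphi(ax-by)$. Differentiating it along fibers gives $\nabla V(x)=\nabla V(y)$, so $V$ is affine on $\intt(\dom V)$; this part is short even at $p=1$. Your equation $V(x)+V(y)=\Phi(ax-by)+\Psi(\cdot)$ confuses the convolution exponent with the lifted coordinate. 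For the genuine extremizer $V(x)=x$ on $[0,\infty)$, $x+y$ is not a function of $ax-by$, whereas $ax-by$ trivially is.

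The more serious gap is the cone step. Affinity of $V$ says nothing about the support: uniform laws on convex bodies have constant potential. So ``the support is a cone with apex $c$'' must be extracted from equality in the log-affine inequality, and your curvature heuristic does not do this.

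\emph{Case $p>1$.} Equality forces equality in the concavity comparison $A(r_p)=bA(1)+aA(p)$ for $A(u)=\log(u^nZ(u))$. This is equality in Pr\'ekopa--Leindler for $e^{-\ell}\one_C$ and $e^{-\ell}\one_{pC}$, and Dubuc's theorem gives $pC=C+v$.

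\emph{Case $p=1$.} The log-affine inequality was obtained only as a limit $p\downarrow1$, so its equality information is lost. The paper needs the retained gap $\delta_1(f_C)\ge(1-\sigma)^{-1}\int_\Omega g\,\Gamma_\sigma$ to force $H_d(1/2)^2=Z\,H_d(1)$ exactly. After that, Dubuc applied at $d=0$ (with $0\in\intt C$) gives $\lambda C=C+v$ with $\lambda=1/\max(a,b)>1$. This, not the mutual-information term, is the real difficulty at the Shannon endpoint.

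\emph{Remaining steps.} From $\lambda C=C+v$ you get a cone $K$. Integrability then gives $\ell>0$ on $K\setminus\{0\}$, hence pointedness and a compact base. Only after that does the simplicial step go through. Since $g>0$ everywhere on a cone, equality in the slice comparison holds for every $d$. Combining this with Dubuc and the scaling $C_d(t)=tC_d(1)$ shows that $K\cap(z+K)$ is a translate of $K$ for all $z$. That is exactly the input to the lattice characterization you correctly anticipate.
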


Equivalently, writing $\one_A$ for the indicator of a set $A$,
$[v]_i$ for the $i$th coordinate of a vector $v$, and $\det T$ for
the determinant of $T$, the extremal densities are
\begin{equation}\label{eq:extremal-density}
 \begin{aligned}
 f(x)=\frac1{|\det T|}
 \exp\left(-\sum_{i=1}^n[T^{-1}(x-c)]_i\right)\,\one_{\{T^{-1}(x-c)\in[0,\infty)^n\}}.
 \end{aligned}
\end{equation}
Their supports are translates of simplicial cones, that is, cones
generated by $n$ linearly independent rays. At $p=\infty$
the equality class is larger: it consists of normalized densities
$e^{-\psi(x-c)}$ with $\psi$ convex and positively homogeneous of
degree one, allowing the value $+\infty$.
Proposition~\ref{prop:min-entropy-equality} gives the precise
statement and a direct proof for all positive weights. In particular, this class includes the two-sided Laplace
density $f(x)=\tfrac12e^{-|x|}$ on $\R$. 

\begin{theorem}[Critical R\'enyi order]\label{thm:critical}
For each fixed $0<p<1$, let $U,U'$ be independent random vectors
uniformly distributed on the same $n$-dimensional simplex. Then 
\begin{equation}\label{eq:failure}
 h_p(U-U')>h_p(U)+n\log 2
\end{equation}
for all sufficiently large $n$. Consequently, the bound $h_p(X-X')\le h_p(X)+n\log 2$
holds for all log-concave densities in all dimensions if and only
if $p\ge1$. 
\end{theorem}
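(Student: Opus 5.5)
We compute both sides explicitly and compare their growth in $n$. Both $h_p$ and the inequality are affine invariant, so we may take $U$ uniform on the standard simplex $\Delta_n=\{x\in[0,\infty)^n:\sum_i x_i\le1\}$. Since $|\Delta_n|=1/n!$, we have $h_p(U)=\log|\Delta_n|=-\log n!$ for every $p$.

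The density $g=f*\widetilde f$ of $U-U'$ is $g(z)=(n!)^2|\Delta_n\cap(\Delta_n+z)|$. Because $0<p<1$, it suffices to bound $\int g^p$ from below. We need
\[
\frac1{1-p}\log\int g^p>-\log n!+n\log2 .
\]
The plan is to find a region $A\subset\Delta_n-\Delta_n$ on which we have a good lower bound for $g$, and then use $\int g^p\ge\int_A g^p$.

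The natural choice is a dilated copy of the difference body. For $z=t w$ with $w\in\Delta_n-\Delta_n$ and $t\in(0,1)$, write $w=x-y$ with $x,y\in\Delta_n$. Then $\Delta_n\cap(\Delta_n+z)$ contains the set $t x+(1-t)\Delta_n$. Indeed, every point $tx+(1-t)u$ with $u\in\Delta_n$ lies in $\Delta_n$. It also equals $ty+(1-t)u+z$, so it lies in $\Delta_n+z$. Hence $g\ge (n!)^2(1-t)^n/n!=n!(1-t)^n$ on $t(\Delta_n-\Delta_n)$.

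The Rogers--Shephard equality case for the simplex gives $|\Delta_n-\Delta_n|=\binom{2n}{n}/n!$. Therefore
\[
\int g^p\ \ge\ (n!)^p(1-t)^{np}\,t^n\binom{2n}{n}\frac1{n!}.
\]
Taking logarithms, dividing by $1-p$, and using $\binom{2n}{n}\approx4^n/\sqrt{\pi n}$, the desired inequality becomes
\[
\frac{1}{1-p}\Bigl[np\log(1-t)+n\log t+n\log4+O(\log n)\Bigr]-\log n!\;>\;-\log n!+n\log 2 .
\]
The $\log n!$ terms cancel, and dividing by $n$ leaves the condition
\[
\frac{p\log(1-t)+\log t+\log 4}{1-p}>\log 2 .
\]

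The main obstacle is whether this crude bound is strong enough. Optimize over $t$: the choice $t=1/(1+p)$ maximizes $p\log(1-t)+\log t$, and the condition becomes
\[
\log 4+p\log p-(1+p)\log(1+p)>(1-p)\log2 .
\]
At $p=1$ the two sides are equal, at $0$ each. As $p\to0$, however, the left side tends to $\log4$, which exceeds the right side $\log2$. The decisive question is the sign of the left minus the right on $(0,1)$. Setting $\phi(p)=(1+p)\log2+p\log p-(1+p)\log(1+p)$, we have $\phi(1)=0$ and $\phi'(p)=\log\frac{2p}{1+p}<0$ for $p<1$. Hence $\phi>0$ on $(0,1)$, so the per-dimension gap is a positive constant $\phi(p)$.

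This gap beats the $O(\log n)/n$ error terms for large $n$, which proves \eqref{eq:failure}. If the dilation bound had turned out too weak, the fallback would have been to compute $g$ exactly through the known formula for sections of the simplex. That fallback is not needed.

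For the final equivalence, combine \eqref{eq:failure} with Theorem~\ref{thm:main} taken at $a=b=1$: the bound holds for $p\ge1$ and fails for each $0<p<1$ in large dimension.
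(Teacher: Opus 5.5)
Your proof is correct, and it reaches exactly the paper's threshold by a more elementary route.

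The paper computes the difference density exactly, $g_n(z)=n!\,(1-\rho(z))_+^n$, where $\rho$ is the gauge of $\Delta_n-\Delta_n$. It integrates radially to get the beta-function ratio \eqref{eq:simplex-moment}, and then needs Stirling's formula for the limit \eqref{eq:simplex-limit}.

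You use only the classical Rogers--Shephard inclusion $tx+(1-t)K\subseteq K\cap(K+t(x-y))$, which holds for every convex body, on the single level set $t(\Delta_n-\Delta_n)$. In light of the exact formula, this amounts to bounding the beta integral $n\int_0^1 r^{n-1}(1-r)^{np}\dd r$ below by $t^n(1-t)^{np}$. At $t=1/(1+p)$, the maximizer of $r(1-r)^p$, this loses only a factor of order $\sqrt n$. That is why your condition $\phi(p)>0$ is precisely the paper's assertion that the limit in \eqref{eq:simplex-limit} exceeds $\log 2$.

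What your route buys:
\begin{itemize}
\item It avoids the explicit intersection formula and all Gamma-function asymptotics.
\item The simplex enters only through $|\Delta_n-\Delta_n|=\binom{2n}{n}|\Delta_n|$. So the same argument shows failure for any sequence of convex bodies with $|K_n-K_n|\ge 4^ne^{-o(n)}|K_n|$.
\end{itemize}

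What the paper's route buys: the exact value of $\int g_n^p/\int f_n^p$ in every dimension. This gives the exact asymptotic rate and explicit low-dimensional counterexamples, such as the remark at $n=3$, $p=1/2$. There your one-level bound certifies an increment of only about $0.26$, far below $3\log 2$.
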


Corollary~\ref{cor:signed-sums} extends the Shannon bound to signed
sums of any number of independent copies. At fixed total positive
and negative weights, entropy is maximized using at most one copy
of each sign, by Yu's entropy comparison argument \cite{Yu}.

Section~\ref{sec:interpolation} compares independent and quadratic
optimal transport couplings, that is, joint laws with prescribed
marginals, between a log-concave law and its reflection. For positive
weights summing to one, the independent interpolation has entropy
at most the common marginal entropy, whereas McCann's displacement
convexity \cite{McCann} gives the opposite bound for optimal transport. This comparison connects the present bound with sharp functional inequalities along Wasserstein geodesics and at Wasserstein barycenters \cite{HanZhuPoincare,HanLiuZhuBarycenter}. 

\subsection{Deficit and equality}
At the Shannon endpoint, passage to the limit proves the inequality
but does not determine its equality cases. An exact deficit
identity supplies the additional argument. Fix $a,b>0$ with $a+b=1$. For a log-concave probability density $f$,
write $\delta_1(f)=h(X)-h(aX-bX')$, where $X,X'$ are independent
random vectors with density $f$. Write $f=e^{-V}$, where $V=-\log f$ is convex, with value
$+\infty$ where $f=0$. 
Its effective domain is $\dom V=\{x:V(x)<\infty\}$.
Take independent standard exponential random variables
$E,E'$, independent of $X,X'$. The vector
$Y=(X,V(X)+E)$ has the log-affine density
\[
 F(x,t)=e^{-t}\one_{\{t\ge V(x)\}}.
\]
Set $D=aX-bX'$, $B=aV(X)-bV(X')$, and $L=aE-bE'$.
Then $aY-bY'=(D,B+L)$, where $Y'$ is defined from $(X',E')$.
We write $I(U;W\mid S)$ for conditional mutual information:
the average, over $S$, of the relative entropy between the
conditional joint law of $(U,W)$ and the product of its
conditional marginals \cite{CoverThomas}.
Section~\ref{sec:preliminaries} gives the precise
definitions, including for singular conditional laws.

\begin{theorem}[Shannon deficit decomposition]\label{thm:information}
With the preceding notation,
\begin{equation}\label{eq:intro-deficit}
 \delta_1(f)=\delta_1(F)+I(B;B+L\mid D).
\end{equation}
Both terms on the right are finite and nonnegative. The conditional
mutual information vanishes if and only if $V$ is affine on the
interior of its effective domain.
\end{theorem}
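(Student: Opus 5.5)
The plan is to derive \eqref{eq:intro-deficit} from two entropy computations in dimension $n+1$: the entropy of the lift $Y$, and a chain-rule expansion of $h(aY-bY')$. The vanishing criterion then follows from a characteristic-function argument and a functional equation for $V$.

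\emph{Entropy of the lift.} Since $-\log F(Y)=V(X)+E$, we have
\[
h(Y)=\E[V(X)]+\E[E]=h(X)+1 .
\]
Note that $F$ is log-concave on $\R^{n+1}$ and integrates to one. Hence Theorem~\ref{thm:main}, applied in dimension $n+1$ with $a+b=1$, gives $\delta_1(F)\ge0$. All entropies involved are finite, since log-concave laws have finite entropy.

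\emph{Chain rule for $aY-bY'$.} Write
\[
h(aY-bY')=h(D,B+L)=h(D)+h(B+L\mid D).
\]
Expanding the conditional term by conditioning additionally on $B$ gives
\[
h(B+L\mid D)=h(B+L\mid B,D)+I(B;B+L\mid D).
\]
Since $L$ is independent of $(X,X')$, and hence of $(B,D)$, translation invariance yields $h(B+L\mid B,D)=h(L)$. The density of $L$ is the asymmetric Laplace density
\[
\frac{1}{a+b}\,e^{-x/a}\ \text{for } x>0,\qquad \frac{1}{a+b}\,e^{x/b}\ \text{for } x<0 .
\]
A direct computation gives $h(L)=\log(a+b)+1=1$. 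Combining,
\[
\delta_1(F)=h(X)+1-h(D)-1-I(B;B+L\mid D)=\delta_1(f)-I(B;B+L\mid D),
\]
which is \eqref{eq:intro-deficit}.

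The delicate part here is rigor. The conditional law of $B$ given $D$ may be singular, for instance a point mass. The chain rule must therefore be phrased with the conditional mutual information defined as an averaged relative entropy, following the definitions of Section~\ref{sec:preliminaries}. It must also be checked that $B+L$ has a conditional density given $D$, and given $(B,D)$, with finite entropy. This holds because $L$ has a bounded density with exponential tails and $B$ has finite first moment. Finiteness of $I$ then follows from the identity, since $\delta_1(f)$ and $\delta_1(F)$ are finite.

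\emph{Vanishing criterion.} Suppose $I(B;B+L\mid D)=0$. Then for a.e.\ $d$, $B$ and $B+L$ are conditionally independent given $D=d$, while $L$ is independent of $(B,D)$. Writing $\varphi$ for the conditional characteristic function of $B$ and $\varphi_L$ for that of $L$, conditional independence gives
\[
\varphi(s+t)\,\varphi_L(s)=\varphi(s)\,\varphi(t)\,\varphi_L(s).
\]
Since $\varphi_L$ never vanishes, $\varphi$ is multiplicative, so $B$ is conditionally a point mass. Thus $B=g(D)$ a.s. for some measurable $g$, that is,
\[
aV(x)-bV(x')=g(ax-bx')\qquad\text{for a.e. } (x,x')\in(\intt\dom V)^2 .
\]
Since $V$ is convex and hence continuous on the interior, I would upgrade this to an identity everywhere, with $g$ continuous on the open set $a\,\intt\dom V-b\,\intt\dom V$.

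Next I would fix $x'$ and differentiate in $x$ wherever $V$ is differentiable, which is a.e.\ by Rademacher or by convexity. This gives $\nabla V(x)=\nabla g(ax-bx')$. Varying $x'$ in a small ball then shows that $\nabla g$ is locally constant. Hence $\nabla V$ is locally constant on the connected set $\intt\dom V$, so $V$ is affine there.

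Conversely, if $V(x)=\langle \ell,x\rangle+c$ on the interior, then $B=\langle \ell,D\rangle+(a-b)c$ a.s. Given $D$, the variable $B$ is then deterministic, so $I=0$.

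I expect this functional-equation step to be the main obstacle. It requires passing from an a.e.\ identity with an arbitrary measurable $g$ to regularity of $g$, and the first thing I would try is exploiting the continuity of $V$ together with Fubini.
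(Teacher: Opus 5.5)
Your proposal is correct and follows the paper's proof (Proposition~\ref{prop:deficit}) essentially step for step: $h(Y)=h(X)+1$, then $I(B;B+L\mid D)=h(B+L\mid D)-h(L)$ written through the conditional densities $k(\cdot-\beta)$ and $k_d$ so that a singular $\mu_d$ is harmless, $h(L)=1$, $\delta_1(F)\ge0$ from Theorem~\ref{thm:main} in dimension $n+1$, and finally conditional independence forcing $B$ to be a function of $D$, followed by a Fubini--continuity argument on the resulting functional equation. There are two local differences. You obtain the conditional point mass from characteristic functions, using that $\varphi_L$ never vanishes, where the paper uses $\E L=a-b$ to get $B=\E[B\mid D]$. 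Also, the obstacle you anticipate disappears if you slice at fixed $d=ax-bx'$ rather than fixed $x'$: the measurable function of $D$ is then a constant on each slice, continuity of $V$ alone extends the a.e.\ identity to the whole slice, and differentiating in $y$ gives $\nabla V((d+by)/a)=\nabla V(y)$ without needing any regularity of that function.
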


The second term is the conditional mutual information between
the input $B$ and output $B+L$ of an additive channel, given $D$.
Proposition~\ref{prop:deficit} proves this identity and the
corresponding R\'enyi deficit formula. 

\subsection{Proof strategy}
The proof has two steps. The first concerns log-affine densities
and estimates the power integrals of a difference density.
The guiding analogy is with additive representation functions
and additive energy. For a finite set $A$ in an abelian group,
the representation function
\[
 r_{A-A}(d)=\#\{(x,y)\in A\times A:x-y=d\}
\]
counts the representations of $d$ as a difference; $\#$ denotes
cardinality. The sum of squares $\sum_d r_{A-A}(d)^2$ is the
additive energy of $A$, which counts quadruples with equal
differences \cite[Lemma 2.9]{TaoVu}. For independent uniform
samples from $A$, the difference law is $r_{A-A}/|A|^2$, whose
squared masses sum to the normalized energy
$|A|^{-4}\sum_d r_{A-A}(d)^2$. In the continuous setting, the
difference density $g=f*\widetilde f$ plays the role of this
normalized representation function, and the corresponding energy
is $\int g^2$. R\'enyi entropy leads more generally to the power
integrals $\int g^p$. This analogy motivates the quantities we
study; log-concavity supplies the additional convex geometry
needed for the sharp bounds and equality cases.

For a log-affine density $f_C=Z^{-1}e^{-\ell}\one_C$, where $C$
is a closed convex set, $\ell:\R^n\to\R$ is linear, and
$Z=\int_C e^{-\ell}$ is positive and finite, the density of the
weighted difference is, up to normalization, an exponentially
weighted intersection mass. We introduce a one-parameter interpolation of these intersections
and apply the Pr\'ekopa--Leindler inequality to their weighted masses. Integration over the difference variable then
gives the sharp comparison of $p$th-power integrals for every
real $p>1$ (Section~\ref{sec:intersections}). The argument also
uses the log-concavity of $u^n\int_C e^{-u\ell}$ for $u>0$,
a special case of \cite[Theorem 2.9]{FMW}, with antecedents in
Borell's extension of Lyapunov's inequality \cite{BorellLyapunov}.
We include a proof by integration over convex slices. 

The second step removes the log-affine assumption. The lift
$Y=(X,V(X)+E)$ has a log-affine density on the convex epigraph
$\{(x,t):t\ge V(x)\}$. It is the logarithmic version of the
subgraph construction used in slice sampling
\cite[Sections 3 and 4]{Neal}. With $a+b=1$, let $g$ and $G$
be the densities of $D=aX-bX'$ and $aY-bY'$, respectively.
For every $p>1$, the comparison is
\[
 \frac1p\int f^p=\int F^p
 \le\int G^p\le\frac1p\int g^p.
\]
The first inequality is the log-affine case in dimension $n+1$.
For the second, let $k$ be the asymmetric Laplace density of
$L$ and let $\mu_d$ be the conditional law of $B$ given $D=d$.
The conditional density of $B+L$ given $D=d$ is
$(k*\mu_d)(u)=\int_{\R}k(u-v)\dd\mu_d(v)$. By Jensen's inequality, $\int(k*\mu_d)^p\le\int k^p=1/p$.
The factors $1/p$ cancel, proving the bound for all $p>1$.

The Shannon bound follows by letting $p\downarrow1$.
When the Shannon deficit vanishes,
\eqref{eq:intro-deficit} forces $\mu_d$ to be a point mass for
almost every $d$ with respect to the law of $D$, and hence $V$
to be affine on the interior of its domain. Dubuc's equality
theorem \cite{Dubuc} then reduces the support classification to
simplicial cones via the classification of finite-dimensional
vector lattices \cite{Yudin,AliprantisTourky}.

\subsection{Further related inequalities}
For a measurable set $A\subseteq\R^n$, write $|A|$ for its Lebesgue
volume. For sets $A,B\subseteq\R^n$, their Minkowski difference is
$A-B=\{x-y:x\in A,\ y\in B\}$. A convex body is a compact convex
set with nonempty interior. The geometric origin of the problem is
the Rogers--Shephard inequality \cite{RS}:
\begin{equation}\label{eq:classical-RS}
 |K-K|\le\binom{2n}{n}|K|
\end{equation}
for a convex body $K\subseteq\R^n$, with equality precisely for
simplices. The order-zero entropy is $h_0(f)=\log|\{x:f(x)>0\}|$.
For independent random vectors $X,X'$ uniformly distributed on $K$, this gives an order-zero entropy bound, since $h_0(X)=\log|K|$ and $h_0(X-X')=\log|K-K|$. Functional
versions based on supremal difference functions and convolution
bodies were developed in \cite{Colesanti,AGGJV}. For the connections between entropy inequalities and convex
geometry, including entropic Rogers--Shephard bounds, see
\cite{MMX}. 

Related convolution bounds include R\'enyi entropy power
inequalities \cite{BobkovChistyakov,MarsigliettiMelbourne}.
Sharp entropy bounds under moment constraints for symmetric
log-concave laws were proved in
\cite{MadimanNayarTkocz,BialobrzeskiNayar}. Bobkov and Madiman
\cite{BMreverse} established a reverse entropy power inequality
after separate volume-preserving linear transformations of the
summands. Ball, Nayar and Tkocz \cite{BNT} proved a reverse entropy
power inequality for one-dimensional projections of a log-concave
random vector, allowing dependence. For sums of independent copies,
Fu and Li \cite[Theorem 3.4]{FuLi} proved the sharp min-entropy bound
$h_\infty(X+X')-h_\infty(X)\le1$ for real-valued log-concave $X$,
with equality for one-sided exponentials. At $p=\infty$ in dimension
one, the optimal constant for the sum is therefore $1$, whereas
the difference bound \eqref{eq:known-range} has optimal constant $\log 2$.

Section~\ref{sec:preliminaries} contains the analytic preliminaries.
Sections~\ref{sec:intersections} and \ref{sec:extension} prove the
weighted inequality and the deficit identities.
Section~\ref{sec:equality} classifies the finite-order and
min-entropy equality cases.
Section~\ref{sec:Shannon-consequences} gives consequences for signed
sums and a comparison with displacement interpolation.
Section~\ref{sec:critical} proves failure below order one by
computing the difference densities for uniform laws on simplices.
Section~\ref{sec:open-problems} discusses extensions to different laws,
positive sums, discrete distributions, and orders below one.

\section{Preliminaries}\label{sec:preliminaries}
For a set $A$, write $\intt A$ for its interior and $\overline A$
for its closure. Set addition and scalar dilation mean
$A+B=\{x+y:x\in A,\ y\in B\}$ and $tA=\{tx:x\in A\}$;
in particular, $A+z=\{x+z:x\in A\}$ is translation by $z$.
We write $|x|$ for the Euclidean norm of a vector $x$.
Convex sets other than convex bodies may be unbounded.
We use upper semicontinuous representatives of log-concave densities
and identify densities equal almost everywhere. Their negative
logarithms are proper lower semicontinuous extended-real convex
functions. For a convex potential $V$, its epigraph is
$\epi V=\{(x,u):u\ge V(x)\}$. We write $\nabla V$ for its
gradient wherever it exists. The potential of a log-concave density
is finite and locally Lipschitz on the interior of its effective domain. Boundaries of full-dimensional convex sets have
Lebesgue measure zero. We use the usual facts about convex functions
and convex bodies as in \cite{Rockafellar,Schneider}.
For $p>0$, $L^p(\R^d)$ denotes the class of measurable functions
$u$ with $\int|u|^p<\infty$, identified up to equality almost
everywhere. We use $\E$ for expectation and $\E[\,\cdot\mid U]$
for conditional expectation given a random vector $U$.

The Pr\'ekopa--Leindler inequality \cite{Prekopa71,Leindler,Prekopa73}
states that, for $0<\theta<1$, if nonnegative measurable functions
$u,v,w$ have positive finite integrals and satisfy
\begin{equation}\label{eq:PL-assumption}
 w((1-\theta)x+\theta y)\ge u(x)^{1-\theta}v(y)^\theta,
 \qquad x,y\in\R^d,
\end{equation}
then
\begin{equation}\label{eq:PL}
 \int w\ge\left(\int u\right)^{1-\theta}
              \left(\int v\right)^\theta.
\end{equation}
In particular, integration in some variables preserves log-concavity;
see also \cite{Borell,BL}. We use Dubuc's equality theorem
\cite{Dubuc} in the following form: if equality holds in
\eqref{eq:PL}, the normalized functions $u/\int u$ and $v/\int v$
are translates almost everywhere; see also \cite[Theorem 1.1]{BFR}. Applied to $e^{-\ell}\one_B$ and $e^{-\ell}\one_C$, where $\ell$ is linear and $B,C$ are closed convex sets with nonempty interiors, it follows that $B$ and $C$ are translates as sets, since full-dimensional closed convex sets that agree almost everywhere are equal.

\begin{lemma}\label{lem:integrability}
For every log-concave probability density $f$ on $\R^d$, there exist constants $A,c>0$ such that
\begin{equation}\label{eq:envelope}
 f(x)\le Ae^{-c|x|}.
\end{equation}
Consequently, $\int f^p<\infty$ for every $p>0$,
$\int f|\log f|<\infty$, and the second moment is finite. 
The same holds for dilations, reflections, and convolutions of such
densities. Moreover,
\begin{equation}\label{eq:entropy-limits}
 \lim_{p\to1}h_p(f)=h(f),\qquad
 \lim_{p\to\infty}h_p(f)=h_\infty(f).
\end{equation}
\end{lemma}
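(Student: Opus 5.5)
The plan is to derive everything from the exponential envelope \eqref{eq:envelope}, and to prove the envelope first.

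\emph{The envelope.} Write $f=e^{-V}$ with $V$ convex and $K=\{f>0\}$ of nonempty interior. Since $\int f=1$, the superlevel set $S=\{f\ge t\}$ has finite volume for each $t>0$. It is also convex, so it is bounded once it has nonempty interior; this holds for small $t>0$, since $f$ is positive on an open set. Fix $t_0\in(0,\|f\|_\infty)$ with $S=\{f\ge t_0\}$ bounded and $x_0$ in its interior with $f(x_0)>t_0$, so $S\subseteq B(x_0,R)$.

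For $|x-x_0|>R$, set $y=x_0+R(x-x_0)/|x-x_0|$, so that $y\notin\intt S$ gives $f(y)\le t_0$. Convexity of $V$ along the ray from $x_0$ through $y$ to $x$ gives
\[
V(x)\ge V(x_0)+\frac{|x-x_0|}{R}\bigl(V(y)-V(x_0)\bigr)\ge V(x_0)+\frac{|x-x_0|}{R}\log\frac{f(x_0)}{t_0}.
\]
This yields $f(x)\le Ae^{-c|x|}$ with $c=R^{-1}\log(f(x_0)/t_0)>0$. Boundedness of $f$, i.e.\ $\|f\|_\infty<\infty$, is needed to handle $|x-x_0|\le R$. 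It follows from the same picture: if $f(x_n)\to\infty$, the log-concave mass of the convex hull of $x_n$ and a small ball where $f\ge t_0$ would blow up. Concretely, $f\ge f(x_n)^{1-\lambda}t_0^\lambda$ on the set $(1-\lambda)x_n+\lambda B$, which has volume $\lambda^d|B|$. Choosing $\lambda$ appropriately makes $\int f$ exceed $1$. I expect this boundedness step to be the only slightly delicate point.

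\emph{Integrability consequences.} The moments follow at once, and so does $\int f^p\le A^p\int e^{-pc|x|}<\infty$. For the entropy, $f|\log f|$ is bounded by $f\log^+\|f\|_\infty$ where $f\ge1$. Where $f<1$, use $u|\log u|\le C_\epsilon u^{1-\epsilon}$, and $f^{1-\epsilon}$ is integrable by the envelope.

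\emph{Closure under operations.} Dilations and reflections preserve log-concavity, and by Pr\'ekopa--Leindler (integrating out a variable) so do convolutions. They also preserve being a probability density. Hence the previous steps apply to each resulting density directly, with their own constants.

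\emph{Limits in \eqref{eq:entropy-limits}.} Let $\phi(p)=\log\int f^p$. It is finite on $(0,\infty)$ and differentiable near $1$ by dominated convergence, with $\phi'(1)=\int f\log f$. The domination is $f^p|\log f|\le C(f^{1/2}+f^{2})(1+|\log f|)$ for $p\in[1/2,2]$, which is integrable. Since $\phi(1)=0$, we get $h_p(f)=\phi(p)/(1-p)\to-\phi'(1)=h(f)$.

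For $p\to\infty$, use $\int f^p\le\|f\|_\infty^{p-1}$, which gives $\liminf_{p\to\infty} h_p(f)\ge-\log\|f\|_\infty$. For the reverse inequality, take the set $\{f>(1-\epsilon)\|f\|_\infty\}$. It has positive measure $m$ because $f$ is upper semicontinuous and log-concave, so this superlevel set is convex and contains a neighborhood of near-maximizers in its relative interior of full dimension. Then $\int f^p\ge m(1-\epsilon)^p\|f\|_\infty^p$, and $\frac{1}{1-p}\log$ of this tends to $-\log\|f\|_\infty-\log(1-\epsilon)$. Letting $\epsilon\to0$ completes the proof.
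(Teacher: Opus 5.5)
Your proof is correct. It differs from the paper mainly in being self-contained. The paper cites Cule--Samworth for the envelope \eqref{eq:envelope} and reads the integrability and moment claims off it. It handles dilations, reflections and convolutions by Pr\'ekopa--Leindler exactly as you do. For both limits in \eqref{eq:entropy-limits} it quotes Wang--Madiman, using only that $f\in L^p$ for every $p>0$.

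You instead prove every step directly:
\begin{itemize}
\item the envelope, from bounded convex superlevel sets and convexity of $V$ along rays;
\item $\|f\|_\infty<\infty$, from a convex-hull mass bound;
\item the Shannon limit, by differentiating $p\mapsto\log\int f^p$ at $p=1$ under an envelope-based dominating function;
\item the min-entropy limit, from a near-maximal superlevel set of positive measure.
\end{itemize}
The citation route is shorter. Yours makes visible two facts the paper relies on later. First, the upper semicontinuous representative is bounded, so a mode exists, as used in Proposition~\ref{prop:min-entropy-equality}. Second, its pointwise supremum equals the essential supremum, so $h_\infty$ does not depend on the choice of representative.

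One step needs a small repair. The claim that $y\notin\intt S$ gives $f(y)\le t_0$ is false in general, because a log-concave density can jump at the edge of its support. For $f=\one_{[0,1]}$ and $t_0=1/2$, the boundary point $1$ of $S$ has $f(1)=1>t_0$. Choose $R$ so that $S$ lies in the open ball $B(x_0,R)$. Then $y\notin S$, so $f(y)<t_0$, which is all the ray estimate uses.

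Two smaller points:
\begin{itemize}
\item Fix $t_0$ without reference to $\|f\|_\infty$, since you prove its finiteness only afterwards.
\item Add one line on why a convex set of finite volume with nonempty interior is bounded: the convex hull of a ball and a distant point has volume growing linearly with the distance.
\end{itemize}
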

\begin{IEEEproof}
The exponential bound \eqref{eq:envelope} is standard; see
\cite[Lemma~1]{CuleSamworth}. It implies the stated
integrability and moment assertions. Since $f\in L^p(\R^d)$ for
every $p>0$, the entropy limits \eqref{eq:entropy-limits} follow
from \cite[Lemma~V.1 and Corollary~V.2]{WangMadiman}.
Dilations and reflections preserve
log-concavity, while convolutions do so by the Pr\'ekopa--Leindler
inequality. Hence the same conclusions apply to these densities.
\end{IEEEproof}

R\'enyi entropy is additive for independent random vectors. For
an invertible linear map $T:\R^n\to\R^n$ and $c\in\R^n$,
\begin{equation}\label{eq:affine-entropy}
 h_p(TX+c)=h_p(X)+\log|\det T|,
 \qquad 0<p\le\infty.
\end{equation}
These identities follow by change of variables.

For random vectors $U,W$ with joint density $f_{U,W}$, their joint differential entropy is 
\[
 h(U,W):=h((U,W))=-\int f_{U,W}(u,w)\log f_{U,W}(u,w)\dd u\dd w.
\]
When the joint and marginal entropies are finite,
$I(U;W)=h(U)+h(W)-h(U,W)$.

We use mutual information and conditional mutual information for
possibly singular joint laws \cite[Sec. 5.5]{Gray}. For probability
measures $P,Q$, their relative entropy (also called Kullback--Leibler divergence)
\cite{KullbackLeibler} is
\[
 \KL(P\Vert Q)=\int\log\frac{\mathrm dP}{\mathrm dQ}\dd P
\]
if $P$ is absolutely continuous with respect to $Q$, and is
$+\infty$ otherwise; $\mathrm dP/\mathrm dQ$ is the Radon--Nikodym
derivative. The reference measure is the second argument of $\KL$. For probability densities $u,v$, we write
$\KL(u\Vert v)=\int u\log(u/v)$, with the integrand zero on
$\{u=0\}$ and value $+\infty$ if $u$ assigns positive mass to $\{v=0\}$.
Let $P_{U,W}$ denote the joint law of random vectors $U,W$, and
$P_U,P_W$ their marginal laws. Then
$I(U;W)=\KL(P_{U,W}\Vert P_U\otimes P_W)$, where $\otimes$
denotes the product of measures. This agrees with the density
definition in the introduction. For an additional random vector $S$, let
$P_{U,W\mid S=s}$ be a regular conditional joint law, with marginals
$P_{U\mid S=s},P_{W\mid S=s}$, and let $P_S$ be the law of $S$.
Conditional mutual information is
\[
 \begin{aligned}
 I(U;W\mid S)=\int \KL\!\left(P_{U,W\mid S=s}\,\Vert\,
                  P_{U\mid S=s}\otimes P_{W\mid S=s}\right)
                  \dd P_S(s).
 \end{aligned}
\]
It is nonnegative and vanishes exactly when $U,W$ are conditionally
independent given $S$.

When the joint
and marginal entropies are finite, the conditional differential
entropy is $h(W\mid U):=h(U,W)-h(U)$. We use the
standard chain rules for differential entropy and mutual
information \cite{CoverThomas}.

\section{Weighted intersections}\label{sec:intersections}
Throughout Sections~\ref{sec:intersections}--\ref{sec:equality},
unless otherwise stated, the weights satisfy
\begin{equation}\label{eq:normalization}
 a,b>0,\qquad a+b=1.
\end{equation}
For a log-concave probability density $f$, let $g$ be the density
of $aX-bX'$, where $X,X'$ are independent with density $f$.
For $1\le p\le\infty$, write
\begin{equation}\label{eq:normalized-deficit}
 \delta_p(f)=h_p(f)-h_p(g).
\end{equation}
The weights are held fixed in this notation, including when the
ambient dimension changes. By  \eqref{eq:affine-entropy}, the normalization loses no generality. For $p>1$, the desired inequality is
\begin{equation}\label{eq:moment-target}
 \int g^p\ge\int f^p.
\end{equation}

Let $C\subseteq\R^n$ be closed and convex with nonempty interior, and
let $\ell$ be linear, with
\begin{equation}\label{eq:log-affine}
 0<Z=\int_Ce^{-\ell(x)}\dd x<\infty,
 \qquad f_C=Z^{-1}e^{-\ell}\one_C.
\end{equation}
Here $X,X'$ are independent with density $f_C$, and $g$ denotes
the density of $aX-bX'$. Define
$Z(u)=\int_Ce^{-u\ell(x)}\dd x$ for $u>0$, so $Z=Z(1)$.
Lemma~\ref{lem:integrability} applied to $f_C$ bounds
$e^{-\ell}\one_C$ by an exponential envelope. Raising it to a
positive power proves finiteness of every $Z(u)$. Locally in $u>0$,
the same bound remains integrable after multiplication by
$|\ell(x)|$, so $Z$ is continuously differentiable and
\begin{equation}\label{eq:Z-derivative}
 (\log Z)'(1)=-\E\ell(X),\qquad
 h(f_C)=\log Z+\E\ell(X).
\end{equation}

The covariogram $d\mapsto|C\cap(C+d)|$ is the volume analogue
of the number of representations of a difference. It is used in
the classical Rogers--Shephard inequality \cite{RS} and its
convolution-body extensions \cite{AGGJV}. We replace volume by
exponential weight and allow the two sets to dilate at different
rates. The family below, with parameter $t\ge0$, has two useful
properties: at $t=0$ its
mass is the normalizing constant, and at $t=1$ it gives the
difference density up to normalization. Log-concavity in $t$
will turn these identities into a bound on the power integrals
of that density.
Set $r_t=b+at$ and $q_t=a+bt$ for $t\ge0$, and consider
\begin{equation}\label{eq:weighted-intersections}
 \begin{aligned}
 C_d(t)=\{x:bx+td\in r_tC,\ ax-td\in q_tC\},\qquad 
 H_d(t)=\int_{C_d(t)}e^{-\ell(x)}\dd x.
 \end{aligned}
\end{equation}
Here $d\in\R^n$ is the value of the weighted difference, and
$t$ is the parameter of the family.

\begin{lemma}\label{lem:intersections}
For each $d$, the function $t\mapsto H_d(t)$ is log-concave on
$[0,\infty)$ and the endpoint values are
\begin{equation}\label{eq:H-endpoints}
 H_d(0)=Z,\qquad H_d(1)=Z^2g(d).
\end{equation}
For $t>0$, $H_d(t)$ is bounded and continuous in $d$, and
\begin{equation}\label{eq:normalizer}
 \mathcal N(t):=\int_{\R^n}H_d(t)\dd d
 =\left(\frac{r_tq_t}{t}\right)^n Z(r_t)Z(q_t).
\end{equation}
Moreover, the function $u\mapsto\log(u^nZ(u))$ is concave for $u>0$.
\end{lemma}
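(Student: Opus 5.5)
Log-concavity in $t$ follows from Pr\'ekopa--Leindler once I know the set $\{(t,d,x):t\ge0,\ x\in C_d(t)\}$ is convex. The membership conditions $bx+td\in r_tC$ and $ax-td\in q_tC$ are not jointly convex in $(t,d,x)$, because $td$ is bilinear. So for fixed $d$ I would check convexity of $\{(t,x): x\in C_d(t)\}$ directly. Put $bx+td=r_t c_1$ and $ax-td=q_t c_2$ with $c_1,c_2\in C$. Since $r_t$ and $q_t$ are affine in $t$, the set
\[
\{(t,y): y\in r_tC\}=\{(t,r_tc):c\in C\}
\]
is convex in $(t,y)$, being a cone-like set over an affine parameter with $r_t>0$. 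Concretely, $(1-\theta)r_{t_0}c_0+\theta r_{t_1}c_1=r_{t_\theta}c$ with $c$ a convex combination of $c_0,c_1$ with weights proportional to $(1-\theta)r_{t_0}$ and $\theta r_{t_1}$. The map $(t,x)\mapsto bx+td$ is affine in $(t,x)$ for fixed $d$, so its preimage of this convex set is convex; the same holds for the second condition. Hence
\[
(t,x)\mapsto e^{-\ell(x)}\one_{C_d(t)}(x)
\]
is log-concave on $[0,\infty)\times\R^n$, and integrating out $x$ by Pr\'ekopa--Leindler gives log-concavity of $H_d$.

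For the endpoints, at $t=0$ we have $r_0=b$ and $q_0=a$, so the conditions become $x\in C$ twice, giving $H_d(0)=Z$. At $t=1$, $r_1=q_1=1$ and $C_d(1)=\{x:bx+d\in C,\ ax-d\in C\}$. Separately I compute
\[
g(d)=Z^{-2}\int\one_C(y)\one_C(y')e^{-\ell(y)-\ell(y')}\dd\sigma
\]
over $\{ay-by'=d\}$. I parametrize the constraint surface by $x$ via $y=ax-d$ and $y'=bx+d$ (wait: $ay-by'=a(ax-d)\dots$ fails). So I will choose the parametrization with $y=bx+d$ and $y'=\dots$ so that linearity gives $\ell(y)+\ell(y')=\ell(x)$ plus a term constant in $d$, and fix the Jacobian. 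The definition of $C_d(t)$ was evidently designed for this, so I expect a direct substitution after sorting out the roles of $a$ and $b$ to produce exactly $Z^2g(d)$, using $a+b=1$.

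For the normalizer, I fix $t>0$ and change variables $(x,d)\mapsto(y_1,y_2)=(bx+td,\ ax-td)$. Its Jacobian determinant is $(-bt-at)^n=(-t)^n$ in absolute value $t^n$, and $x=y_1+y_2$ since $a+b=1$. Then
\[
\mathcal N(t)=t^{-n}\int_{r_tC}\int_{q_tC}e^{-\ell(y_1)-\ell(y_2)}\dd y_1\dd y_2 .
\]
Scaling $y_1=r_tz_1$ and $y_2=q_tz_2$ gives $t^{-n}(r_tq_t)^nZ(r_t)Z(q_t)$. Boundedness in $d$ follows from $H_d(t)\le\int e^{-\ell}\one_{\{x:bx+td\in r_tC\}}$, which is a translate-scaled copy of $Z$ times a factor $e^{\text{linear in }d}$. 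That is not obviously bounded, so instead I would use the exponential envelope of Lemma~\ref{lem:integrability} on $e^{-\ell}\one_C$ applied to both constraints. Continuity in $d$ comes from dominated convergence, since boundaries of convex sets are null.

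Finally, for concavity of $\log(u^nZ(u))$, I write
\[
u^nZ(u)=\int_{uC}e^{-\ell(y)}\dd y=\int e^{-\ell(y)}\one_{\{(u,y):y\in uC\}}\dd y .
\]
The set $\{(u,y):y\in uC,\ u>0\}$ is convex by the same computation as above, so Pr\'ekopa--Leindler gives log-concavity in $u$. The main obstacle is the $t=1$ identification with correct Jacobian and linear terms; everything else reduces to the convexity of the parametrized sets $\{(t,y):y\in r_tC\}$.
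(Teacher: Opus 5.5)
Everything except the endpoint $H_d(1)=Z^2g(d)$ is correct and follows the paper's proof: log-concavity via convexity of $\{(t,y):y\in r_tC\}$ for fixed $d$ plus Pr\'ekopa--Leindler, the value $H_d(0)=Z$, your two-step substitution for $\mathcal N(t)$ (the paper composes it into the single map $u=(bx+td)/r_t$, $v=(ax-td)/q_t$), and the perspective-set argument for $\log(u^nZ(u))$.

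\textbf{The gap: the $t=1$ identity is not proved.} Your first parametrization fails: with $y=ax-d$, $y'=bx+d$ one gets $ay-by'=(a-b)x-d$. The second is left as an expectation, and you anticipate a leftover term from linearity. There is none, and there must be none, since an extra factor would destroy the pointwise identity.

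The missing step is your own normalizer substitution at $t=1$, where $t^n=1$. Put $u=bx+d$ for the value of $X$ and $v=ax-d$ for the value of $X'$. Then $au-bv=d$ and $u+v=x$, and $(x,d)\mapsto(u,v)$ has determinant $(-1)^n$. So for every bounded measurable $\phi$,
\[
\E\phi(aX-bX')=\iint\phi(au-bv)f_C(u)f_C(v)\dd u\dd v=\int_{\R^n}\phi(d)\int_{\R^n}f_C(bx+d)f_C(ax-d)\dd x\dd d.
\]
Thus $g(d)=\int f_C(bx+d)f_C(ax-d)\dd x$. Because $\ell(bx+d)+\ell(ax-d)=\ell(x)$ exactly, the integrand equals $Z^{-2}e^{-\ell(x)}\one_{C_d(1)}(x)$, which gives $H_d(1)=Z^2g(d)$. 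Working on $\R^{2n}$ also avoids your surface-measure formula, which as written omits the coarea factor $(a^2+b^2)^{-n/2}$.

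\textbf{Boundedness and continuity in $d$.} Here you take a different route from the paper. The paper uses the same substitution to identify $H_d(t)/\mathcal N(t)$ as the density of $(ar_t/t)U_t-(bq_t/t)V_t$, with $U_t,V_t$ independent and log-affine. That density is a convolution of a bounded density with an integrable one, hence bounded and continuous.

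Your envelope idea also works, but you should write it out. With $y_1=bx+td$ and $y_2=ax-td$, Lemma~\ref{lem:integrability} gives $e^{-\ell(y_1)}\one_{r_tC}(y_1)\le(AZ)^{r_t}e^{-c|y_1|}$, and likewise for $y_2$. Since $x=y_1+y_2$ and $r_t+q_t=1+t$,
\[
e^{-\ell(x)}\one_{C_d(t)}(x)\le(AZ)^{1+t}e^{-c(|y_1|+|y_2|)}\le(AZ)^{1+t}e^{-c|x|}.
\]
This majorant does not depend on $d$. It therefore gives boundedness and serves as the dominating function in your continuity argument. It also shows $H_d(t)<\infty$ for every $t\ge0$, which Pr\'ekopa--Leindler needs.

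Your discarded one-constraint bound can also be rescued. The two single-constraint bounds carry factors $e^{t\ell(d)/b}$ and $e^{-t\ell(d)/a}$, and one of these is at most $1$.

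The paper's identification is shorter and gives continuity for free. Yours is more elementary and produces an explicit uniform integrable majorant.
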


The last assertion is the log-affine case of
\cite[Theorem 2.9]{FMW}. The proof uses the perspective construction
in \cite[Lemma 2.8]{FMW}. Related normalized
integral inequalities were proved by Berwald \cite{Berwald} and
Borell \cite{BorellLyapunov}. 
\begin{IEEEproof}
For a fixed $d$, the condition $bx+td\in(b+at)C$ is convex in
$(x,t)$: convex combinations reduce to convex combinations of points
of $C$, with weights proportional to their positive dilation
parameters. The second condition in \eqref{eq:weighted-intersections}
has the same property. The integrand is therefore jointly log-concave
in $(x,t)$, and Pr\'ekopa--Leindler gives the asserted log-concavity.

At $t=0$ both conditions say $x\in C$. At $t=1$, the substitution
$x=u+v$, $d=au-bv$ has Jacobian determinant of absolute value one and inverse
$u=bx+d$, $v=ax-d$. Linearity of $\ell$ then yields 
\eqref{eq:H-endpoints}. For the total integral at $t>0$, use
\[
 u=\frac{bx+td}{r_t},\qquad v=\frac{ax-td}{q_t}.
\]
The domain becomes $C\times C$, and the inverse map is
\begin{equation}\label{eq:inverse-map}
 x=r_tu+q_tv,\qquad d=\frac{ar_tu-bq_tv}{t}.
\end{equation}
The absolute value of its Jacobian determinant is $(r_tq_t/t)^n$, and its exponential weight
is $e^{-r_t\ell(u)}e^{-q_t\ell(v)}$. By Tonelli's theorem, we then obtain \eqref{eq:normalizer}.

If $U_t,V_t$ are independent with densities proportional to
$e^{-r_t\ell}\one_C$ and $e^{-q_t\ell}\one_C$, respectively,
\eqref{eq:inverse-map} identifies $H_d(t)/\mathcal N(t)$ as the
density of $(ar_t/t)U_t-(bq_t/t)V_t$. Both densities are bounded
and integrable. Their dilated convolution is bounded and continuous,
which proves the assertion concerning $d$. Finally,
\[
 u^nZ(u)=\int_{uC}e^{-\ell(y)}\dd y.
\]
The set $\{(y,u):u>0,\ y\in uC\}$ is convex. Applying
Pr\'ekopa--Leindler to these slices proves the last assertion.
\end{IEEEproof}

\begin{proposition}\label{prop:log-affine-moment}
If $f_C$ is as in \eqref{eq:log-affine}, then
\eqref{eq:moment-target} holds for every $p>1$.
\end{proposition}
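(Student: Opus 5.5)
The plan is to apply the log-concavity in Lemma~\ref{lem:intersections} at the three parameter values $t=0$, $t=1$ and $t=p$, integrate the resulting pointwise bound over $d$, and finish with the concavity of $u\mapsto\log(u^nZ(u))$.

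\textbf{Pointwise bound.} Fix $p>1$ and $d\in\R^n$. Since $1=(1-\tfrac1p)\cdot 0+\tfrac1p\cdot p$, log-concavity of $t\mapsto H_d(t)$ on $[0,\infty)$ gives
\[
 H_d(1)\ge H_d(0)^{1-1/p}H_d(p)^{1/p}.
\]
Raising this to the power $p$ and using \eqref{eq:H-endpoints}, we get
\[
 Z^{2p}g(d)^p\ge Z^{p-1}H_d(p).
\]
If $H_d(p)=0$ the inequality is trivial, so zero values cause no difficulty.

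\textbf{Integration over $d$.} Integrating in $d$ and using \eqref{eq:normalizer} at $t=p$ yields
\[
 \int g^p\ge Z^{-p-1}\left(\frac{r_pq_p}{p}\right)^nZ(r_p)Z(q_p).
\]
The right-hand side is measurable and finite: $H_d(p)$ is continuous in $d$ by Lemma~\ref{lem:intersections}, and $\mathcal N(p)$ is finite. On the other side,
\[
 \int f_C^p=Z^{-p}\int_Ce^{-p\ell}=Z^{-p}Z(p).
\]
Hence \eqref{eq:moment-target} reduces to showing
\[
 (r_pq_p)^nZ(r_p)Z(q_p)\ge p^nZ(1)Z(p)=1^nZ(1)\,p^nZ(p).
\]

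\textbf{Concavity step.} This is the only genuinely nontrivial point. Set $\varphi(u)=\log(u^nZ(u))$, which is concave on $(0,\infty)$ by the last assertion of Lemma~\ref{lem:intersections}. Since $a+b=1$, we have
\[
 r_p+q_p=(a+b)(1+p)=1+p.
\]
Each of $r_p=b+ap$ and $q_p=a+bp$ is a convex combination of $1$ and $p$, so both lie in $[1,p]$. Write $r_p=(1-\lambda)\cdot1+\lambda p$ with $\lambda=a$. Then $q_p=\lambda\cdot1+(1-\lambda)p$, and concavity gives
\[
 \varphi(r_p)\ge(1-\lambda)\varphi(1)+\lambda\varphi(p),\qquad
 \varphi(q_p)\ge\lambda\varphi(1)+(1-\lambda)\varphi(p).
\]
Adding these gives $\varphi(r_p)+\varphi(q_p)\ge\varphi(1)+\varphi(p)$. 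Exponentiating yields exactly the reduced inequality, which completes the proof of \eqref{eq:moment-target} for every $p>1$.

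I do not expect a serious obstacle beyond these checks. The choice $t=p$ is dictated by the requirement that the exponent on $H_d(1)$ become $p$.
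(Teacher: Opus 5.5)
Your proposal is correct and follows the paper's proof essentially step for step. You use log-concavity of $H_d$ at $t=0,1,p$ to get $H_d(p)\le Z^{1-p}H_d(1)^p$, integrate via \eqref{eq:normalizer}, and close with concavity of $u\mapsto\log(u^nZ(u))$ at $r_p=b+ap$ and $q_p=a+bp$.
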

\begin{IEEEproof}
	Since $H_d$ is log-concave and $H_d(0)=Z$, we have  
	\[
	H_d(p)\le Z^{1-p}H_d(1)^p.
	\] 
Using \eqref{eq:H-endpoints} and integrating, we obtain
\begin{equation}\label{eq:integrated-slices}
 \int g^p\ge\frac{\mathcal N(p)}{Z^{p+1}}
 =\left(\frac{r_pq_p}{p}\right)^n
       \frac{Z(r_p)Z(q_p)}{Z^{p+1}}.
\end{equation}
Write $A(u)=\log(u^nZ(u))$. Since $r_p=b\cdot1+a\cdot p$ and
$q_p=a\cdot1+b\cdot p$, by Lemma~\ref{lem:intersections}, we obtain 
\[
 A(r_p)\ge bA(1)+aA(p),\qquad
 A(q_p)\ge aA(1)+bA(p).
\]
Adding and exponentiating yields
\begin{equation}\label{eq:partition-bound}
 Z(r_p)Z(q_p)\ge
       \left(\frac p{r_pq_p}\right)^n Z\,Z(p).
\end{equation}
Combining \eqref{eq:integrated-slices} and \eqref{eq:partition-bound} then gives $\int g^p\ge Z(p)/Z^p=\int f_C^p$.
\end{IEEEproof}

\section{Exponential lifting and conditional mutual information}\label{sec:extension}
Let $X,X'$ be independent with a log-concave probability density
$f=e^{-V}$. The random variable $V(X)=-\log f(X)$ is the
information content of $X$ \cite{FMW}. The
subgraph identity $f(x)=\int_0^\infty\one_{\{s\le f(x)\}}\dd s$
represents $X$ as the first $n$ coordinates of a random point
uniformly distributed on $\{(x,s):0<s\le f(x)\}$.
The substitution $s=e^{-u}$ turns this into
an exponential density on $\epi V$. This is the same
auxiliary-variable construction, in logarithmic coordinates,
as in \cite[Sections 3 and 4]{Neal}. Earlier auxiliary-variable
representations appear in Edwards and Sokal \cite{EdwardsSokal}.
We use the representation to compare convolution entropies.
Take independent
standard exponential variables $E,E'$, independent of $X,X'$, and set
\begin{equation}\label{eq:extended-variables}
 Y=(X,V(X)+E),\qquad Y'=(X',V(X')+E').
\end{equation}
The density of $Y$ is
\begin{equation}\label{eq:epigraph-density}
 F(x,u)=e^{-u}\one_{\{u\ge V(x)\}},\qquad (x,u)\in\R^{n+1}.
\end{equation}
Indeed, given $X=x$, the last coordinate of $Y$ has density
$e^{V(x)-u}\one_{\{u\ge V(x)\}}$. Multiplying by
$f(x)=e^{-V(x)}$ yields \eqref{eq:epigraph-density}. Thus $F$ is a log-affine probability
density on a closed full-dimensional convex set, and
\begin{equation}\label{eq:epigraph-moment}
 \int_{\R^{n+1}}F^p=\frac1p\int_{\R^n}f^p,
 \qquad p>0.
\end{equation}

Put
\begin{equation}\label{eq:conditional-variables}
 \begin{gathered}
 D=aX-bX',\qquad L=aE-bE',\qquad 
 B=aV(X)-bV(X').
 \end{gathered}
\end{equation}
Then $aY-bY'=(D,B+L)$ and $L$ is independent of $(B,D)$.
Under \eqref{eq:normalization}, $L$ has the asymmetric Laplace density
\begin{equation}
 \begin{gathered}
 k(u)=
 \begin{cases}
 e^{-u/a},&u\ge0,\\
 e^{u/b},&u<0,
 \end{cases}
 \end{gathered}
\end{equation}
which implies 
\begin{equation}\label{eq:noise-density} 
	\int k^p=\frac1p,\qquad h(k)=1,\qquad \E L=a-b. 
\end{equation}
The density is symmetric when $a=b$.
Let $\mu_d$ be a regular conditional law of $B$ given $D=d$, and
write
\begin{equation}\label{eq:conditional-mixture}
 \begin{gathered}
 k_d(u)=\int k(u-v)\dd\mu_d(v),\qquad 
 G(d,u)=g(d)k_d(u).
 \end{gathered}
\end{equation}
Here $k_d$ is the conditional density of $B+L$ given $D=d$, and
$G$ is the joint density of $aY-bY'=(D,B+L)$; no density for
$\mu_d$ is assumed. The notation $k_D$ means this conditional
density evaluated at the random value $d=D$.

\begin{IEEEproof}[Proof of Theorem~\ref{thm:main}]
Fix $p>1$. By convexity of $z\mapsto z^p$ and Tonelli's theorem, we obtain 
\begin{equation}\label{eq:mixture-moment}
 \int k_d(u)^p\dd u
 \le\iint k(u-v)^p\dd u\dd\mu_d(v)=\frac1p.
\end{equation}
The density $F$ satisfies the hypotheses of
Proposition~\ref{prop:log-affine-moment} in dimension $n+1$.
Together with \eqref{eq:epigraph-moment} and
\eqref{eq:conditional-mixture}, it yields
\begin{equation}\label{eq:moment-sandwich}
 \frac1p\int f^p=\int F^p\le\int G^p\le\frac1p\int g^p.
\end{equation} 
This proves \eqref{eq:moment-target}. The limits in
Lemma~\ref{lem:integrability} give $p=1$ and $p=\infty$.
Applying the normalized inequality with coefficients
$a/(a+b)$ and $b/(a+b)$ and then scaling the difference by $a+b$
proves \eqref{eq:main} for arbitrary positive coefficients.

To verify sharpness, now let $a,b>0$ be arbitrary. For independent
standard exponentials $E,E'$, the density of $aE-bE'$ is
\begin{equation}\label{eq:extremal-noise}
 k_{a,b}(u)=\frac1{a+b}
 \begin{cases}
 e^{-u/a},&u\ge0,\\
 e^{u/b},&u<0.
 \end{cases}
\end{equation}
Consequently,
\[
 \int k_{a,b}^p=\frac{(a+b)^{1-p}}p,
 \qquad \int_0^\infty e^{-px}\dd x=\frac1p.
\]
Thus $h_p(aE-bE')-h_p(E)=\log(a+b)$ for all finite positive
orders, including $1$ by passage to the limit, and also for
$p=\infty$ by comparing maxima. Independent products and
\eqref{eq:affine-entropy} prove sharpness in every dimension and
sufficiency of \eqref{eq:extremizers} for equality.
\end{IEEEproof}

The next proposition expresses the deficit as a sum of two
nonnegative terms. The weights remain normalized as in
\eqref{eq:normalization}.

\begin{proposition}\label{prop:deficit}
For $1<p<\infty$, define
\[
 R_p(f)=\frac{p\int G^p}{\int g^p}.
\]
Then $0<R_p(f)\le1$ and
\begin{equation}\label{eq:Renyi-deficit}
 \delta_p(f)=\delta_p(F)-\frac1{p-1}\log R_p(f).
\end{equation}
At the Shannon endpoint,
\begin{equation}\label{eq:Shannon-deficit}
 \delta_1(f)=\delta_1(F)+I(B;B+L\mid D).
\end{equation}
The two summands on the right of either identity are finite and
nonnegative. In each identity the second summand vanishes if and
only if $V$ is affine on $\intt(\dom V)$.
\end{proposition}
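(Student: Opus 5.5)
The plan is to prove \eqref{eq:Renyi-deficit} by a direct algebraic rearrangement using \eqref{eq:epigraph-moment}, and \eqref{eq:Shannon-deficit} by the chain rule for differential entropy applied to $(D,B+L)$. For $1<p<\infty$ we have $\delta_p(f)=\frac1{p-1}\log(\int g^p/\int f^p)$. Likewise $\delta_p(F)=\frac1{p-1}\log(\int G^p/\int F^p)$, since $G$ is the density of $aY-bY'$. Substituting $\int F^p=\frac1p\int f^p$ gives
\[
 \delta_p(F)=\frac1{p-1}\log\frac{p\int G^p}{\int f^p}=\delta_p(f)+\frac1{p-1}\log R_p(f),
\]
which is \eqref{eq:Renyi-deficit}. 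The bound $R_p\le1$ is exactly the right inequality of \eqref{eq:moment-sandwich}, and $R_p>0$ holds because $G$ is a nonzero density. Nonnegativity of $\delta_p(F)$ is Proposition~\ref{prop:log-affine-moment} in dimension $n+1$. All quantities are finite by Lemma~\ref{lem:integrability}, applied to $f$, to $F$ (which is log-concave), and to their convolutions.

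For $p=1$, we write $h(D,B+L)=h(D)+h(B+L\mid D)$. We have $h(F)=h(f)+h(E)=h(f)+1$, using independence of $X$ and $E$ and the fact that $Y\mapsto(X,V(X)+E)$ is a shear with unit Jacobian. Given $D$, $L$ is independent of $B$, so $h(B+L\mid B,D)=h(L)=h(k)=1$ by \eqref{eq:noise-density}. Hence
\[
 h(B+L\mid D)=1+I(B;B+L\mid D).
\]
This gives $\delta_1(F)=h(f)+1-h(g)-1-I=\delta_1(f)-I$, as required. I would justify this chain rule carefully, since $\mu_d$ may be singular. The plan is to write $I(B;B+L\mid D)=\int \KL(k_d(\cdot-\cdot)\ldots)$ explicitly: conditionally on $D=d$ and $B=v$, the output has density $k(\cdot-v)$, so
\[
 I(B;B+L\mid D)=\E\bigl[\KL\bigl(k(\cdot-B)\Vert k_D\bigr)\bigr]=\E\bigl[-h(k)-\textstyle\int k(u-B)\log k_D(u)\dd u\bigr].
\]
This equals $h(k_D)$ averaged minus $1$, provided $\E|\log k_D(B+L)|<\infty$. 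That integrability should follow from $\int G|\log G|<\infty$ (Lemma~\ref{lem:integrability} for the log-concave $G$) together with finiteness of $\int g|\log g|$. Finiteness and nonnegativity of $\delta_1(F)$ follow from the $p\downarrow1$ limit of the log-affine case.

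The main obstacle is the vanishing characterization. For $p>1$, $R_p=1$ forces equality in Jensen \eqref{eq:mixture-moment} for a.e.\ $d$. Since $z\mapsto z^p$ is strictly convex, the function $v\mapsto k(u-v)$ must be $\mu_d$-a.s.\ constant for a.e.\ $u$, which forces $\mu_d$ to be a Dirac mass. For $p=1$, $I=0$ means $B$ and $B+L$ are conditionally independent given $D$; since the channel is nondegenerate (the shifts $k(\cdot-v)$ are distinct), this again forces $\mu_d$ to be a Dirac mass. In both cases $B=aV(X)-bV(X')$ is a.s.\ a function $\beta(D)$ of $D$. Then, for $x,x'$ in $\intt(\dom V)$ with $ax-bx'=d$ fixed, the quantity $aV(x)-bV(x')$ is constant. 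Differentiating along the fiber $x\mapsto(x+bs,x'+as)$, using continuity of $V$ on the open set to pass from a.e.\ to everywhere, gives $ab\nabla V(x)=ab\nabla V(x')$ wherever both gradients exist. This holds on an open set of pairs, so $\nabla V$ is a.e.\ constant on the connected open convex set $\intt(\dom V)$, hence $V$ is affine there.

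Conversely, if $V$ is affine, $V=\ell+c$, then $B=\ell(D)+(a-b)c$ is a function of $D$. Then $\mu_d$ is a Dirac mass, so $k_d$ is a translate of $k$, giving $R_p=1$ and $I=0$. The delicate points are the a.e.\ fiber argument and ensuring the set of pairs $(x,x')\in(\intt\dom V)^2$ has positive measure, which is automatic because it is a full-dimensional product.
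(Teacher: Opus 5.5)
Your proposal is correct and follows essentially the same route as the paper. You obtain the R\'enyi identity by rearranging with $\int F^p=\frac1p\int f^p$ and the Shannon identity from the chain rule, justified by the integrability of $\log k_D(B+L)=\log G(D,B+L)-\log g(D)$. The vanishing criterion also matches: $\mu_d$ is a point mass, so $B$ is a function of $D$, and differentiating $aV(x)-bV(x')$ along the fibers $ax-bx'=d$ gives that $V$ is affine. The one minor difference is at $p=1$: you get the point mass directly from conditional independence, since then $k(\cdot-v)=k_d$ for $\mu_d$-almost every $v$, whereas the paper uses $\E[B+L\mid B,D]=\E[B+L\mid D]$. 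Both arguments are valid.
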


\begin{IEEEproof}
The bound on $R_p(f)$ follows from \eqref{eq:mixture-moment},
and \eqref{eq:epigraph-moment} yields
\[
\frac{\int G^p}{\int F^p}
=\frac{\int g^p}{\int f^p}\,R_p(f).
\] 
Taking logarithms proves \eqref{eq:Renyi-deficit}. 
Theorem~\ref{thm:main} has already established $\delta_p(F)\ge0$.

For \eqref{eq:Shannon-deficit}, the entropy chain rule yields 
$h(Y)=h(X)+1$. The integrals defining $h(X)$, $h(Y)$, $h(D)$,
and $h(D,B+L)$ are absolutely convergent by
Lemma~\ref{lem:integrability}, also in dimension $n+1$.
Using the conditional density in \eqref{eq:conditional-mixture},
we can write the conditional mutual information as
\begin{equation}\label{eq:conditional-information}
 \begin{aligned}
 I(B;B+L\mid D)=\E\log\frac{k(L)}{k_D(B+L)}=h(B+L\mid D)-h(L).
 \end{aligned}
\end{equation}
Both logarithms are absolutely integrable. Indeed,
$\E|\log k(L)|=1$ by \eqref{eq:noise-density}, and
\[
\log k_D(B+L)=\log G(D,B+L)-\log g(D),
\]
with both terms on the right absolutely integrable by
Lemma~\ref{lem:integrability}.
Thus \eqref{eq:conditional-information} remains valid even when
the conditional law of $B$ given $D$ is singular.
Combining this identity with $h(Y)=h(X)+1$, $h(L)=1$,
and $aY-bY'=(D,B+L)$ yields \eqref{eq:Shannon-deficit}. 

For $p>1$, equality in \eqref{eq:mixture-moment} is equivalent to
$\mu_d$ being a point mass. Indeed, equality in Jensen's inequality
for the strictly convex function $z\mapsto z^p$, followed by
Fubini's theorem, gives
$k(\,\cdot-v)=k(\,\cdot-w)$ almost everywhere for
$\mu_d\otimes\mu_d$-almost every $(v,w)$. Distinct translates
cannot coincide: their finite means differ by $v-w$.
Thus $v=w$ almost surely, which forces $\mu_d$ to be a point mass.
The converse is immediate. If $R_p(f)=1$, the nonnegative difference between the two sides of
\eqref{eq:mixture-moment} has zero integral against $g(d)^p\dd d$.
This measure has the same null sets as $g(d)\dd d$ on $\{g>0\}$.
 It follows that $B$ is almost surely a measurable function of $D$. 

At $p=1$, zero conditional mutual information means conditional
independence of $B$ and $B+L$ given $D$. The variables are integrable,
since $\E|V(X)|=\int f|\log f|<\infty$. With $\E L=a-b$,
\[
 \begin{aligned}
 B+(a-b)=\E[B+L\mid B,D]=\E[B+L\mid D]=\E[B\mid D]+(a-b), 
 \end{aligned}
\]
which again implies $B$ is almost surely a measurable function of $D$. 

It remains to show that $B$ is almost surely a measurable  function of $D$ if and only if $V$ is affine on $\intt(\dom V)$. Suppose that
\begin{equation}\label{eq:potential-equation}
 aV(X)-bV(X')=\varphi(aX-bX')\qquad\text{almost surely}
\end{equation}
for a measurable function $\varphi:\R^n\to\R$.
On $O=\intt(\dom V)$ the density $f$
is strictly positive. After the invertible linear change of variables
$(x,y)\mapsto(d,y)=(ax-by,y)$, for almost every $d$, by Fubini's theorem, we obtain 
\[
 aV((d+by)/a)-bV(y)=\varphi(d)
\]
for almost every $y$ in the open convex set $O\cap(aO-d)/b$.
Continuity of $V$ extends the equality throughout that set.
Differentiating at almost every $y$ gives
$\nabla V((d+by)/a)=\nabla V(y)$. Transforming back and applying Fubini's theorem again, $\nabla V(x)=\nabla V(y)$ for almost every $(x,y)\in O^2$. Hence $\nabla V$ is constant almost everywhere on $O$. Then by local Lipschitz regularity and connectedness, $V$ is affine on $O$. Conversely, if $V(x)=\ell(x)+c$ there, with $\ell$ linear and $c\in\R$, then 
$B=\ell(D)+(a-b)c$, and both remainder terms vanish.
\end{IEEEproof}

In particular, \eqref{eq:main} is strict for every $1\le p<\infty$
when the potential is not affine on the interior of its domain.
The remaining equality condition concerns the support of a
log-affine density.

\section{Equality cases}\label{sec:equality}
We retain the normalized positive weights \eqref{eq:normalization}. 
The following estimate retains a concavity gap at a fixed
interpolation parameter. A vanishing Shannon deficit then
forces equality in the Pr\'ekopa--Leindler inequality. 

\begin{proposition}\label{prop:Shannon-gap}
For $f_C$ as in \eqref{eq:log-affine}, let $\Omega=\{d:g(d)>0\}$.
For $0\leq s\leq 1$ define the concavity gap on $\Omega$ by
\[
 \Gamma_s(d)=\log H_d(s)-(1-s)\log Z-s\log H_d(1).
\]
Then for every fixed $0<\sigma<1$,
\begin{equation}\label{eq:retained-gap}
 \delta_1(f_C)\ge\frac1{1-\sigma}
                      \int_\Omega g(d)\Gamma_\sigma(d)\dd d\ge0.
\end{equation}
\end{proposition}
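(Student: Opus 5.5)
The idea is to rerun the proof of Proposition~\ref{prop:log-affine-moment} for $p>1$, keep the concavity gap at the intermediate parameter $\sigma$ instead of discarding it, and then differentiate at $p=1$.

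\textbf{Step 1 (gap is well defined and nonnegative).} Fix $d\in\Omega$ and write $\phi(t)=\log H_d(t)$. By Lemma~\ref{lem:intersections}, $\phi$ is concave on $[0,\infty)$ with $\phi(0)=\log Z$ and $\phi(1)=\log(Z^2g(d))$, which is finite on $\Omega$. Concavity gives $\phi(\sigma)\ge(1-\sigma)\phi(0)+\sigma\phi(1)$. Hence $H_d(\sigma)>0$, $\Gamma_\sigma(d)$ is finite, and $\Gamma_\sigma\ge0$; in particular the second inequality in \eqref{eq:retained-gap} holds. Measurability of $d\mapsto H_d(\sigma)$ follows from Tonelli.

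\textbf{Step 2 (sharpened pointwise bound).} For $p>1$, use the three points $\sigma<1<p$. Concavity gives monotone slopes:
\[
\phi(p)-\phi(1)\le\frac{p-1}{1-\sigma}\bigl(\phi(1)-\phi(\sigma)\bigr).
\]
Substituting $\phi(1)-\phi(\sigma)=(1-\sigma)(\phi(1)-\phi(0))-\Gamma_\sigma(d)$ yields
\[
H_d(p)\le Z^{1-p}H_d(1)^p\exp\Bigl(-\tfrac{p-1}{1-\sigma}\Gamma_\sigma(d)\Bigr)
=Z^{1+p}g(d)^p\exp\Bigl(-\tfrac{p-1}{1-\sigma}\Gamma_\sigma(d)\Bigr).
\]
Off $\Omega$ we have $H_d(1)=0$, so $H_d(p)=0$ by log-concavity and $H_d(0)>0$. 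Integrating in $d$ and combining with \eqref{eq:normalizer} and \eqref{eq:partition-bound} exactly as in the proof of Proposition~\ref{prop:log-affine-moment} gives
\[
\int f_C^p\le\int_\Omega g^p\,e^{-c(p-1)\Gamma_\sigma},\qquad c=\frac1{1-\sigma},\quad p>1.
\]

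\textbf{Step 3 (differentiate at $p=1$).} Both sides equal $1$ at $p=1$. Subtract $1$, divide by $p-1>0$, and let $p\downarrow1$. On the left, $(\int f_C^p-1)/(p-1)\to\int f_C\log f_C=-h(f_C)$, by dominated convergence using the exponential envelope of Lemma~\ref{lem:integrability}. On the right, split
\[
g^pe^{-c(p-1)\Gamma}-g=(g^p-g)\,e^{-c(p-1)\Gamma}+g\,\bigl(e^{-c(p-1)\Gamma}-1\bigr).
\]
\begin{itemize}
\item For the first term, $(g^p-g)/(p-1)\to g\log g$, dominated via the envelope for $g$ (Lemma~\ref{lem:integrability} covers convolutions). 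The exponential factor lies in $[0,1]$ because $\Gamma_\sigma\ge0$, and it tends to $1$.
\item For the second term, $(1-e^{-c(p-1)\Gamma})/(p-1)$ increases to $c\Gamma$ as $p\downarrow1$, since $x\mapsto(1-e^{-x})/x$ is decreasing. Monotone convergence therefore gives the limit $-c\int g\Gamma_\sigma$, possibly $-\infty$.
\end{itemize}
We obtain $-h(f_C)\le-h(g)-c\int_\Omega g\Gamma_\sigma$. Because $h(f_C)$ and $h(g)$ are finite, this forces $\int g\Gamma_\sigma<\infty$, and rearranging gives \eqref{eq:retained-gap}.

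The main obstacle is this limit interchange. The monotonicity argument for the gap term is what lets me avoid assuming a priori integrability of $g\Gamma_\sigma$; the rest is routine bookkeeping of the earlier moment proof.
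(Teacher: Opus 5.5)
Your proof is correct, but it takes a different route from the paper's. The paper stays on the interpolation side $s\in(\sigma,1)$. With $\rho_s(d)=H_d(s)/\mathcal N(s)$ it writes the exact identity
\[
 \frac{\int_\Omega g\Gamma_s+\KL(g\Vert\rho_s)}{1-s}=\frac{\log\mathcal N(s)-s\log\mathcal N(1)}{1-s}-\log Z-h(g).
\]
It first checks that $\int_\Omega g|\log H_d(s)|<\infty$, using boundedness of $H_d(s)$ and the chord bound $\log H_d(s)\ge(1+s)\log Z+s\log g(d)$. It then lets $s\uparrow1$, using only $(\log\mathcal N)'(1)=-\E\ell(X)$, and keeps the gap through $\Gamma_s\ge\frac{1-s}{1-\sigma}\Gamma_\sigma$ together with $\KL\ge0$. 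You instead extrapolate past $t=1$: the three-slope inequality at $\sigma<1<p$ keeps $\Gamma_\sigma$ in the pointwise bound for $H_d(p)$, which gives the gap-retaining R\'enyi inequality $\int f_C^p\le\int_\Omega g^pe^{-(p-1)\Gamma_\sigma/(1-\sigma)}$, and you then differentiate at $p=1$. Your split into a dominated term and a monotone term handles the limit interchange correctly. The paper's route gives an exact identity with an extra $\KL$ slack term and never uses concavity of $u\mapsto\log(u^nZ(u))$, but it has to verify integrability up front. Your route reuses the moment argument of Proposition~\ref{prop:log-affine-moment} almost verbatim, and finiteness of $\int_\Omega g\Gamma_\sigma$ comes out of monotone convergence. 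It also yields a quantitative strengthening of the $p>1$ moment comparison along the way. Your appeal to \eqref{eq:partition-bound} is not actually needed: since $\mathcal N(1)=Z^2$ and $(\log\mathcal N)'(1)-\log Z=-h(f_C)$, you could differentiate $\mathcal N(p)/Z^{p+1}$ at $p=1$ directly, just as the paper differentiates $\log\mathcal N$ at $s=1$.
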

\begin{IEEEproof}
By the log-concavity of $H_d$, $s\mapsto\Gamma_s(d)$ is
nonnegative and concave on $[0,1]$. Since $g$ and $H_d(s)$ are continuous
in $d$, $\Omega$ is open and $\Gamma_s$ is continuous there.
For $0<s<1$, define the probability density
$\rho_s(d)=H_d(s)/\mathcal N(s)$. Expanding $\KL(g\Vert\rho_s)$ and using
$H_d(1)=\mathcal N(1)g(d)$, we obtain 
\begin{equation}\label{eq:finite-Shannon}
 \begin{aligned}
 \frac{\int_\Omega g\Gamma_s+\KL(g\Vert\rho_s)}{1-s}=\frac{\log\mathcal N(s)-s\log\mathcal N(1)}{1-s}
                       -\log Z-h(g).
 \end{aligned}
\end{equation} 
For fixed $0<s<1$, $H_d(s)$ is bounded above uniformly in $d$,
while log-concavity gives
\[
\log H_d(s)\ge(1+s)\log Z+s\log g(d),
\qquad d\in\Omega.
\]
Since $\int g|\log g|<\infty$ by
Lemma~\ref{lem:integrability}, these bounds imply
$\int_\Omega g|\log H_d(s)|<\infty$. Thus all terms in \eqref{eq:finite-Shannon} are finite. 

Note that equations \eqref{eq:normalizer} and \eqref{eq:Z-derivative}
give $\mathcal N(1)=Z^2$ and $(\log\mathcal N)'(1)=-\E\ell(X)$. By \eqref{eq:Z-derivative}, the right side of \eqref{eq:finite-Shannon} tends to
$\log Z+\E\ell(X)-h(g)=\delta_1(f_C)$ as $s\uparrow1$.
For $\sigma\le s<1$, by concavity, 
\[
 \Gamma_s(d)\ge\frac{1-s}{1-\sigma}\Gamma_\sigma(d).
\]
Using this estimate and $\KL(g\Vert\rho_s)\ge0$ in \eqref{eq:finite-Shannon}, we obtain \eqref{eq:retained-gap} by letting $s\uparrow1$. 
\end{IEEEproof}

The next lemma is a geometric form of Yudin's
classification of finite-dimensional Archimedean vector lattices
\cite{Yudin}; see \cite{AliprantisTourky,Schaefer} for modern
accounts. For a cone $K$, pointed means $K\cap(-K)=\{0\}$,
and generating means $K-K=\R^n$. A compact base is a compact
convex section meeting each nonzero ray of $K$ exactly once.
A closed pointed generating cone defines an Archimedean order,
and principal intersections make that order a lattice.
We include a proof using the Riesz decomposition property, so
the geometric part of the equality argument is self-contained.

\begin{lemma}\label{lem:simplicial}
Let $K\subseteq\R^n$ be a closed pointed convex cone with nonempty
interior and a compact base. Suppose that
\begin{equation}\label{eq:principal-intersections}
 K\cap(z+K)\text{ is a translate of }K
 \qquad\text{for every }z\in\R^n. 
\end{equation}
Then $K$ is generated by $n$ linearly independent rays. 
\end{lemma}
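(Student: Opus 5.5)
We prove Lemma~\ref{lem:simplicial} by passing to the partial order $x\le y\iff y-x\in K$. Since $K$ is closed, pointed and generating, this is a closed partial order on $\R^n$. It is Archimedean: if $kx\le y$ for all $k\ge1$, then $x\le 0$ by closedness. We show that hypothesis \eqref{eq:principal-intersections} makes $(\R^n,\le)$ a vector lattice, and then identify its extreme rays.

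\emph{The lattice structure.} Fix $z$ and write $K\cap(z+K)=w+K$. Then $w\ge 0$ and $w\ge z$. Any common upper bound $u$ of $0$ and $z$ lies in $K\cap(z+K)=w+K$, so $u\ge w$. Thus $w=\sup(0,z)=z^+$ exists. By translation invariance, $\sup(x,y)=x+(y-x)^+$ exists for all $x,y$, so $(\R^n,\le)$ is a vector lattice.

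\emph{Extreme rays are disjoint.} Let $R_1,\dots,R_m$ be the extreme rays of $K$; there are finitely or infinitely many, and they exist by the compact base and Krein--Milman. Choose $e_i\in R_i\setminus\{0\}$, and fix $i\ne j$. Put $c=e_i\wedge e_j$. Then $0\le c\le e_i$, so $e_i=c+(e_i-c)$ with both summands in $K$. Extremality of the ray $R_i$ gives $c\in R_i$, and similarly $c\in R_j$. Since $R_i\cap R_j=\{0\}$, we get $c=0$, so $e_i\perp e_j$.

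\emph{Finitely many independent rays.} In a vector lattice, finitely many pairwise disjoint positive nonzero elements are linearly independent. Suppose $\sum\alpha_ie_i=0$ and split the indices by the sign of $\alpha_i$. Disjointness gives $\bigl(\sum_{\alpha_i>0}\alpha_ie_i\bigr)\wedge\bigl(\sum_{\alpha_i<0}|\alpha_i|e_i\bigr)=0$, because for positive elements $u\wedge(v+w)\le u\wedge v+u\wedge w$. But these two sums are equal, and an element whose infimum with itself is $0$ must vanish. Hence each side is $0$, and positivity with pointedness forces every $\alpha_i=0$. Therefore $m\le n$. Since $K$ is the closed convex hull of its extreme rays (compact base) and has nonempty interior, the rays span $\R^n$, so $m=n$. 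Finally $K=\mathrm{cone}(e_1,\dots,e_n)$, and $K$ is simplicial.

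The main obstacle is the disjointness step. It needs the identity $c=e_i\wedge e_j\in K$, which holds because $c\ge 0$. It also needs the distributive-type inequality for disjoint positive elements, valid in any vector lattice. We verify that inequality via the Riesz decomposition property: from $0\le x\le y+z$ with $y,z\ge0$ we get $x=y_1+z_1$ with $0\le y_1\le y$ and $0\le z_1\le z$. The lattice property from the first step supplies this, by taking $y_1=x\wedge y$.
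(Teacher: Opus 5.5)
Your proposal is correct and follows essentially the same route as the paper. The principal-intersection hypothesis gives least upper bounds and hence the Riesz decomposition property; distinct extreme rays have no common nonzero positive lower bound, so any finite family of extreme-ray generators is linearly independent; and the compact base supplies at least $n$ extreme rays. The difference is only in packaging: you get independence from the general fact that pairwise disjoint positive elements are independent, using pointedness for the one-sided case, while the paper splits the relation with a strictly positive functional and applies Riesz decomposition directly to a single term $\alpha_i v_i\le_K\sum_{j\in J}\beta_j v_j$.
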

\begin{IEEEproof}
Define $x\le_K y$ when $y-x\in K$. The common upper bounds of any
$x,y$ form $(x+K)\cap(y+K)$, which is a principal upper set by
\eqref{eq:principal-intersections}. The pair has a least upper
bound $x\vee y$, unique by pointedness, and a greatest lower bound
$x\wedge y=-((-x)\vee(-y))$.

If $0\le_Kw\le_Ku+v$ with $u,v\in K$, set
\[
 w_1=w\wedge u,\qquad w_2=w-w_1=(w-u)\vee0.
\]
Then $0\le_K w_1\le_K u$. Since $v$ is a common upper bound
of $w-u$ and $0$, we also have $0\le_K w_2\le_K v$.
Together with $w=w_1+w_2$, this proves the Riesz decomposition
property \cite{AliprantisTourky,Schaefer}. The corresponding
decomposition for any finite sum follows by induction. 

Two different extreme rays have no common nonzero positive lower
bound: such a vector would lie on both rays. Suppose a finite family
$v_1,\ldots,v_m$ of distinct extreme-ray generators satisfies a
nontrivial linear relation. A strictly positive functional defining
the compact base lets us write it using disjoint nonempty index
sets $I,J$ and positive coefficients $\alpha_i,\beta_j$ as
\[
 \sum_{i\in I}\alpha_i v_i=\sum_{j\in J}\beta_jv_j,
 \qquad \alpha_i,\beta_j>0,\qquad  I\cap J=\varnothing.
\]
Fix $i\in I$. By the Riesz decomposition property,
$\alpha_i v_i=\sum_{j\in J}w_j$, where
$0\le_K w_j\le_K\beta_jv_j$.
Since all summands lie in $K$, each $w_j$ is also below
$\alpha_i v_i$. The rays generated by $v_i$ and $v_j$ are distinct, so every $w_j$ is zero, contradicting $\alpha_i v_i\ne0$. Thus any finite set of distinct extreme rays is linearly independent, and there are at most $n$ of them. A compact base has affine dimension $n-1$ and is the convex hull of
its extreme points \cite{Schneider}. It therefore has at least $n$
extreme points, which correspond to the extreme rays. There are
exactly $n$, and they generate $K$.
\end{IEEEproof}

\begin{proposition}\label{prop:log-affine-equality}
For $f_C$ as in \eqref{eq:log-affine} and $1\le p<\infty$,
$\delta_p(f_C)=0$ if and only if $C$ is a translate of a
full-dimensional simplicial cone.
\end{proposition}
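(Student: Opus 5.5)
We prove the two directions separately. For sufficiency, suppose $C=c+K$ with $K$ generated by $n$ linearly independent rays. After an invertible affine change of variables, $K=[0,\infty)^n$, and the linear function $\ell$ becomes $\sum_i\lambda_i x_i$. Integrability of $e^{-\ell}\one_C$ forces every $\lambda_i>0$. Rescaling each coordinate then makes $f_C$ the density of independent standard exponentials, shifted and transformed linearly. The sharpness computation in the proof of Theorem~\ref{thm:main}, together with \eqref{eq:affine-entropy}, gives $\delta_p(f_C)=0$ for all $1\le p<\infty$.

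For necessity, we first reduce every $p$ to a statement about $H_d$. For $1<p<\infty$, equality $\int g^p=\int f_C^p$ forces equality in both steps of the proof of Proposition~\ref{prop:log-affine-moment}. In particular, $H_d(p)=Z^{1-p}H_d(1)^p$ for almost every $d$, hence for every $d\in\Omega$ by continuity. Since $\log H_d$ is concave, lies above the chord on $[0,1]$, and satisfies this endpoint relation at $p>1$, it must be affine on $[0,p]$. Hence $\Gamma_\sigma\equiv0$ on $\Omega$. For $p=1$, Proposition~\ref{prop:Shannon-gap} gives $\int_\Omega g\Gamma_\sigma=0$, so $\Gamma_\sigma=0$ on $\Omega$ by continuity and nonnegativity. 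In either case, for each $d\in\Omega$ we have equality in the Pr\'ekopa--Leindler step of Lemma~\ref{lem:intersections} between $t=0$ and $t=1$. The slices $C_d(0)=C$ and $C_d(1)=\{x:bx+d\in C,\ ax-d\in C\}$ carry the common weight $e^{-\ell}$, and both have nonempty interior for $d\in\Omega$. Dubuc's theorem, in the form stated in Section~\ref{sec:preliminaries}, then says that $C_d(1)$ is a translate of $C$.

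Next we make the intersections explicit. Writing $C_d(1)=\frac1b(C-d)\cap\frac1a(C+d)$, we see that for every $z$ in an open set, $C\cap(\frac ba(C+d)-\ldots)$ is a translate of a dilate of $C$. Volume comparison via the integrated weight then pins down the dilation factor. I expect this step to be the main obstacle: the sets are unbounded, so volume cannot be used directly, and translation-plus-dilation must be converted into the statement that $(C-x_0)\cap(C-x_0+z)$ is a translate of $C-x_0$. My first attempt would be to pass to asymptotic (recession) cones. Let $K=\operatorname{rec}C$. Then $K$ is closed, convex and pointed, since $e^{-\ell}$ is integrable on $C$ and so $\ell>0$ on $K\setminus\{0\}$. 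The recession cone of $C_d(1)$ is $K\cap K=K$, which is consistent. Intersecting the translate identity and rescaling by $\lambda\to\infty$ about points of $C$ should show that $C$ itself is a translate of $K$: the identity $C_d(1)=C+w_d$, combined with the hyperplane $\{\ell=\mathrm{const}\}$ and the integrability constraint, shows that $C$ is a cone with apex at the point minimizing $\ell$, because the identity propagates along the one-parameter family $t\mapsto C_d(t)$ (all log-affine, all translates). Once $C=c+K$, the set of differences $d\in\Omega$ has interior $\Omega=aK-bK=\R^n$ up to boundary, since $K$ is generating. The condition becomes $K\cap(z+K)$ being a translate of $K$ for all $z$ in an open dense set, and closedness extends it to all $z$.

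Finally, $K$ has nonempty interior and a compact base given by $\{\ell=1\}\cap K$, because $\ell>0$ on $K\setminus\{0\}$. Lemma~\ref{lem:simplicial} therefore shows that $K$ is simplicial, completing the proof.
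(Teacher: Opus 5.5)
\textbf{Gap: the step showing that $C$ is a translated cone is not proved.}

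Your reduction is sound. For $p>1$, equality in both steps gives $H_d(p)=Z^{1-p}H_d(1)^p$ on $\Omega$, so $\log H_d$ is affine on $[0,p]$ and $\Gamma_\sigma\equiv0$. For $p=1$, Proposition~\ref{prop:Shannon-gap} gives the same conclusion. Dubuc then makes $C_d(1)$ a translate of $C$ for every $d\in\Omega$. Your last stage also matches the paper: once $C$ is a translated cone $K$, you get $K\cap(K+d/(ab))$ a translate of $K$, the compact base $K\cap\{\ell=1\}$, and Lemma~\ref{lem:simplicial}.

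The middle step, deducing that $C$ itself is a translate of a cone, is not established. The recession-cone paragraph only checks consistency ($\operatorname{rec}C_d(1)=K$). The claims that rescaling ``should show'' $C$ is a translate of $K$, that the identity ``propagates along $t\mapsto C_d(t)$,'' and that the apex is the minimizer of $\ell$ are assertions, not arguments. As written, you never identify an intersection $C_d(1)$ that is a nontrivial dilate of $C$, and that is what forces scale invariance. The volume-comparison idea you mention cannot work for unbounded sets, as you yourself note.

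\textbf{The missing idea is to choose $d$ so that $C_d(1)$ is a dilate of $C$.} Translate the density so that $0\in\intt C$; this preserves $\delta_p$. Then $0\in\Omega$. Since $0\in C$ and $C$ is convex, $C_0(1)=b^{-1}C\cap a^{-1}C=\max(a,b)^{-1}C$. Your Dubuc conclusion at $d=0$ therefore gives $\lambda C=C+v$ with $\lambda=1/\max(a,b)>1$, and no volume comparison is needed.

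From here the cone structure follows directly:
\begin{itemize}
\item With $c=v/(\lambda-1)$ and $K=C-c$, one has $\lambda K=K$.
\item Applying $\lambda^{-j}$ to a point of $K$ and using closedness gives $0\in K$.
\item Convexity then gives $tK\subseteq K$ for all $t\ge0$, so $K$ is a closed convex cone.
\item Your integrability argument makes $K$ pointed with a compact base.
\end{itemize}

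This is how the paper handles $p=1$. For $p>1$ the paper instead obtains $pC=C+v$ from equality in \eqref{eq:partition-bound}, via Dubuc applied to $C$ and $pC$. Your route through $C_d(1)$, completed with the choice $d=0$, treats all $1\le p<\infty$ uniformly, which is a mild simplification.

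Finally, for a cone with nonempty interior one has $\Omega=\R^n$ exactly, since $\intt K-\intt K=\R^n$. So your ``open dense set plus closedness'' extension is unnecessary.
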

\begin{IEEEproof}
We first show that $C$ is a translate of a cone, and then use the
intersection identities to prove that this cone is simplicial.
Suppose $p>1$. Equality in
\eqref{eq:integrated-slices} and \eqref{eq:partition-bound} forces
equality in each of the two concavity comparisons for
$A(u)=\log(u^nZ(u))$. In particular,
\begin{equation}\label{eq:partition-equality}
 A(r_p)=bA(1)+aA(p).
\end{equation} 
By convexity of $C$, $bC+apC=r_pC$. Thus
\eqref{eq:partition-equality} expresses equality in the
Pr\'ekopa--Leindler inequality for $e^{-\ell}\one_C$
and $e^{-\ell}\one_{pC}$ at parameter $a$.
Dubuc's equality theorem then yields $pC=C+v$ for some $v\in\R^n$. 

Now suppose $p=1$. Proposition~\ref{prop:Shannon-gap} gives
$\int_\Omega g\Gamma_{1/2}=0$, which implies
$\Gamma_{1/2}(d)=0$ for every $d\in\Omega$. Convexity of the slice set yields $(C+C_d(1))/2\subseteq C_d(1/2)$. Since $\ell$ is
linear, the corresponding weighted indicators satisfy the
Pr\'ekopa--Leindler hypothesis. The identity $\Gamma_{1/2}(d)=0$
gives $H_d(1/2)=Z^{1/2}H_d(1)^{1/2}$, which is equality in the
integral inequality. Dubuc's theorem therefore yields 
\begin{equation}\label{eq:Shannon-slice-translation}
 C_d(1)\text{ is a translate of }C,\qquad d\in\Omega.
\end{equation}
Translate the density so that $0\in\intt C$, which preserves its deficit. Then $g(0)>0$ and, since $0\in C$ and $C$ is convex, it holds that 
\[
C_0(1)=b^{-1}C\cap a^{-1}C
=\frac1{\max(a,b)}C.
\] 
The dilation factor is strictly greater than one. In both cases, we have proved
\begin{equation}\label{eq:dilated-translate}
 \lambda C=C+v\qquad\text{for some }\lambda>1.
\end{equation}

Put $c=v/(\lambda-1)$ and $K=C-c$. Then $\lambda K=K$.
Contraction by powers of $\lambda$, followed by closedness, yields 
$0\in K$. Convexity and invariance under these dilations imply
$tK\subseteq K$ for every $t\ge0$. Thus $K$ is a closed convex cone with nonempty interior. We claim that 
\begin{equation}\label{eq:cone-positivity}
	\ell(w)>0,\qquad w\in K\setminus\{0\}.
\end{equation} 
Indeed, if $\ell(w)\le0$ for some $w\in K\setminus\{0\}$,
translating an interior ball of $K$ by sufficiently separated
positive multiples of $w$ produces pairwise disjoint balls in $K$
whose weighted masses do not decrease, contradicting
$\int_K e^{-\ell}<\infty$. Hence $K$ is pointed.
Moreover, $K\cap\{\ell=1\}$ is compact: otherwise, normalizing
an unbounded sequence in this section and passing to a subsequence
would yield a unit vector in $K$ on which $\ell$ vanishes,
contradicting \eqref{eq:cone-positivity}. 

After translating the density to $K$, apply the equality comparisons
again. Since $\intt K-\intt K=\R^n$ and $a,b>0$, its weighted
difference density is positive everywhere. If $p>1$, equality in the
integrated slice comparison, with continuity in $d$, yields
\[
 H_d(p)=Z^{1-p}H_d(1)^p, \qquad d\in\R^n. 
\]
Equality in Pr\'ekopa--Leindler at parameter $1/p$ implies that
$C_d(p)$ is a translate of $K$. On a cone, the definition simplifies
to
\begin{equation}\label{eq:cone-slices}
 C_d(t)=(K-td/b)\cap(K+td/a)=tC_d(1),\qquad t>0.
\end{equation}
It follows that $C_d(1)$ is a translate of $K$. If $p=1$, the same
conclusion follows directly from \eqref{eq:Shannon-slice-translation},
now valid for every $d$. Translating \eqref{eq:cone-slices} at $t=1$
by $d/b$ shows that $K\cap(K+d/(ab))$ is a translate of $K$. Thus, Lemma~\ref{lem:simplicial} proves that $K$ is simplicial. 

Conversely, integrability on a translated simplicial cone forces
positive rates along its generators, and the calculation
\eqref{eq:extremal-noise} gives equality.
\end{IEEEproof}

\begin{IEEEproof}[Proof of Theorem~\ref{thm:equality}]
By the scaling property of entropy, we may assume that $a+b=1$. If $\delta_p(f)=0$ for a finite $p\ge1$,
Proposition~\ref{prop:deficit} forces the second term in the
corresponding deficit identity to vanish. Hence $V$ is affine on
$O=\intt(\dom V)$. Since the boundary of $\overline O$ has
Lebesgue measure zero, $f$ agrees almost everywhere with a density
of the form $Z^{-1}e^{-\ell}\one_C$ on $C=\overline O$. Then by Proposition~\ref{prop:log-affine-equality}, we obtain $C=c+K$, where $K$ is a full-dimensional simplicial cone.
If its independent generators are $v_1,\ldots,v_n$, set
$\lambda_i=\ell(v_i)>0$. In the coordinates
$x=c+\sum_i t_iv_i$, with $t_i\ge0$, the normalized density is
\[
 \prod_{i=1}^n\lambda_i e^{-\lambda_i t_i}\one_{\{t_i\ge0\}}, 
\]
which implies \eqref{eq:extremizers}, with $T$ having columns
$v_i/\lambda_i$.
These laws attain equality for all $a,b>0$, as shown in the proof
of Theorem~\ref{thm:main}. 
\end{IEEEproof}

\begin{proposition}[Min-entropy equality]\label{prop:min-entropy-equality}
Fix $a,b>0$. A log-concave probability density $f$ attains equality
in \eqref{eq:main} at $p=\infty$ if and only if, for some $c\in\R^n$,
\begin{equation}\label{eq:homogeneous-extremizers}
 f(x)=Z^{-1}e^{-\psi(x-c)},\qquad 
Z=\int_{\R^n}e^{-\psi(x)}\dd x\in(0,\infty), 
\end{equation}
where $\psi:\R^n\to[0,\infty]$ is lower semicontinuous, convex,
$\psi(0)=0$, and $\psi(tx)=t\psi(x)$ for every $t>0$.
Equivalently, $\psi$ is the extended-valued Minkowski functional
of a convex body $K$ with $0\in K$; the origin may lie on
the boundary of $K$. An extremizer for one positive pair of weights
is an extremizer for every such pair. 
\end{proposition}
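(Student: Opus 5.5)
The plan is a direct argument at $p=\infty$ for arbitrary $a,b>0$, using only convexity along rays through a maximizer of $f$; it does not use the lifting argument. Write $f=e^{-V}$. The density of $aX-bX'$ is
\[
 g(d)=a^{-n}\int f\!\left(\tfrac{d+by}{a}\right)f(y)\dd y .
\]
Since $f$ is upper semicontinuous with the envelope of Lemma~\ref{lem:integrability}, it attains its maximum at some point $m$. Set $W(w)=V(m+w)-V(m)$. Then $W\colon\R^n\to[0,\infty]$ is convex and lower semicontinuous with $W(0)=0$, so $t\mapsto W(tw)/t$ is nondecreasing on $(0,\infty)$. This ray monotonicity gives superadditivity along rays:
\[
 W(\alpha w)+W(\beta w)\le W((\alpha+\beta)w),\qquad \alpha,\beta>0 .
\]

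\emph{Lower bound at a specific point.} Evaluate $g$ at $d=(a-b)m$ and substitute $y=m+w$, so that $(d+by)/a=m+(b/a)w$. Applying superadditivity with $\alpha=b/a$, $\beta=1$ gives
\[
 g(d)\ge a^{-n}f(m)^2\int e^{-W(\frac{a+b}{a}w)}\dd w
 =a^{-n}f(m)^2\Big(\frac{a}{a+b}\Big)^n e^{V(m)}
 =\frac{\|f\|_\infty}{(a+b)^n}.
\]
This reproves \eqref{eq:main} at $p=\infty$ for all weights.

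\emph{Necessity.} Suppose equality holds, so $\|g\|_\infty=\|f\|_\infty/(a+b)^n$. Then the displayed bound is an equality. Hence superadditivity is an equality for almost every $w$, and therefore $W(tw)/t$ is constant for $t\in[\min(b/a,1),(a+b)/a]$. To pass from almost every $w$ to every $w$, I would argue separately on $\intt(\dom W)$, where $W$ is continuous, and on its complement, where lower semicontinuity and the closedness of $\overline{\dom W}$ should force $W=\infty$ consistently along rays. Replacing $w$ by $sw$ shifts this interval multiplicatively. Overlapping intervals then cover $(0,\infty)$, so $W(tw)=tW(w)$ for all $t>0$. Thus $\psi=W$ and $c=m$ give \eqref{eq:homogeneous-extremizers}. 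I expect this almost-everywhere-to-everywhere step, with extended values and boundary rays, to be the main technical obstacle.

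\emph{Sufficiency for every pair of weights.} Let $\psi$ be as in \eqref{eq:homogeneous-extremizers} and take $c=0$. Then $\psi$ is sublinear and $\|f\|_\infty=f(0)=Z^{-1}$. For $ax-by=d$, split $x+y$ as a convex combination with weights $a/(a+b)$ and $b/(a+b)$, and use positive homogeneity and convexity:
\[
 \psi(x)+\psi(y)=\tfrac{a+b}{a}\Big[\tfrac{a}{a+b}\psi(x)+\tfrac{b}{a+b}\psi\big(\tfrac ab y\big)\Big]\ge\psi(x+y),
\qquad x+y=\tfrac da+\tfrac{a+b}{a}y .
\]
Hence, for every $d$,
\[
 g(d)\le a^{-n}Z^{-2}\int e^{-\psi(d/a+\frac{a+b}{a}y)}\dd y=\frac{Z^{-1}}{(a+b)^n}.
\]
Together with the lower bound, this gives equality. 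Since the argument holds for all $a,b>0$, an extremizer for one pair is an extremizer for every pair.

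\emph{Equivalent description.} Finally, let $K=\{\psi\le1\}$. It is closed, convex and contains $0$. It has nonempty interior because $Z>0$, and it is bounded because $Z<\infty$ forces $\psi>0$ off the origin with a uniform linear lower bound, by the compactness argument of Proposition~\ref{prop:log-affine-equality}. Positive homogeneity identifies $\psi$ with the extended-valued Minkowski functional of $K$. Conversely, every such functional is lower semicontinuous, convex and positively homogeneous of degree one.
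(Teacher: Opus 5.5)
Your lower bound at $d=(a-b)m$ and your sufficiency argument are correct. They are the paper's argument written without first normalizing $a+b=1$; the paper's pointwise inequality $q(bz)q(az)\ge q(z)$ is exactly your ray superadditivity.

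The gap is in necessity, and it goes beyond the a.e.-to-everywhere step you flag. Equality gives, for a.e.\ $w$, the extended-real identity $W(\alpha w)+W(w)=W((1+\alpha)w)$ with $\alpha=b/a$. When the right side is $+\infty$, this identity only says $W(\max(\alpha,1)w)=+\infty$. Meanwhile $W(\min(\alpha,1)w)$ may be finite, so your inference that $t\mapsto W(tw)/t$ is constant on $[\min(\alpha,1),1+\alpha]$ fails for such $w$. These $w$ form a null set only once you know that $\intt(\dom W)$ is a cone, which you have not shown. Your proposed upgrade is also wrong as sketched. The complement of $\intt(\dom W)$ contains $\dom W\setminus\intt(\dom W)$, where $W$ is typically finite: for the exponential extremizers, $\psi(x)=\sum_i x_i$ is finite on the boundary faces of $[0,\infty)^n$. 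So nothing forces $W=\infty$ there. As written, the necessity direction is not proved.

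The missing idea is to compare with a one-term intermediate quantity. Then the a.e.\ identity loses no information at $+\infty$, and it becomes an identity between two closed convex functions. Set $\lambda=\max(\alpha,1)$ and $\kappa=(1+\alpha)/\lambda=(a+b)/\max(a,b)>1$. Ray monotonicity gives, pointwise,
\[
 W(\alpha w)+W(w)\le\kappa W(\lambda w)\le W((1+\alpha)w).
\]
Equality of the integrals of $e^{-W(\alpha w)-W(w)}$ and $e^{-W((1+\alpha)w)}$ therefore forces $e^{-\kappa W(\lambda w)}=e^{-W((1+\alpha)w)}$ a.e., that is, $W(\kappa u)=\kappa W(u)$ for a.e.\ $u$. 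This is the paper's step $q(az)q(bz)\ge q(sz)^{1/s}\ge q(z)$ with $s=\max(a,b)$, after normalization.

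Both sides are lower semicontinuous convex functions of $u$ whose domains have nonempty interior. A.e.\ equality gives the domains the same interior, and continuity gives equality on that interior. Off the common closure both sides are $+\infty$. On the boundary, the values of a closed convex function are radial limits from the interior. Hence the identity holds everywhere. Iterating gives $W(\kappa^j u)=\kappa^jW(u)$ for all $j\in\mathbb Z$. Monotonicity of $t\mapsto W(tu)/t$ then yields $W(tu)=tW(u)$ for every $t>0$, including where $W(u)=+\infty$. No ray-by-ray covering argument is needed.

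One smaller point: for the converse in the Minkowski-functional description, you should also check that $Z\in(0,\infty)$. For example, $Z=n!|K|$ by integrating over sublevel sets.
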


\begin{IEEEproof}
	By the scaling property of entropy, we may assume that $a+b=1$.
	Choose a mode $c$ of $f$, which exists by upper semicontinuity
	and Lemma~\ref{lem:integrability}, and set $M=f(c)=\|f\|_\infty$ and $q(x)=f(c+x)/M$. 
	Then $q(0)=1$, $0\le q\le1$, and $M\int q=1$.
	Let $g$ be the continuous density of $aX-bX'$. Since $q(0)=1$, log-concavity yields 
	$q(tz)\ge q(z)^t$ for $0<t<1$. By a change of variables, it holds that 
	\begin{equation}\label{eq:mode-comparison}
		\begin{aligned}
			g((a-b)c)=M^2\int q(bz)q(az)\dd z\ge M^2\int q(z)\dd z=M,
		\end{aligned}
	\end{equation}
	which proves the min-entropy bound.
	
	If equality holds, then $\|g\|_\infty=M$, so equality holds
	in \eqref{eq:mode-comparison}. Set $s=\max(a,b)<1$.
	Log-concavity at the origin yields 
	\[
	q(az)q(bz)\ge q(sz)^{1/s}\ge q(z).
	\]
	Equality of the integrals therefore yields
	$q(sz)=q(z)^s$ almost everywhere. Since both sides are upper
	semicontinuous log-concave functions with full-dimensional
	positivity sets, the identity holds everywhere.
	
	Set $\psi=-\log q$, with $-\log0=+\infty$.
	Then $\psi$ is lower semicontinuous, convex, and nonnegative,
	with $\psi(0)=0$ and $\psi(sz)=s\psi(z)$. Iterating this identity in both directions  yields 
	$\psi(s^jz)=s^j\psi(z)$ for every $j\in\mathbb Z$. For each $z$, convexity and $\psi(0)=0$ imply that
	$t\mapsto\psi(tz)/t$ is nondecreasing on $(0,\infty)$.
	Its values agree at all $s^j$, so it is constant.
	Thus $\psi(tz)=t\psi(z)$ for every $t>0$, including when
	$\psi(z)=+\infty$. Since $M\int q=1$, this gives
	\eqref{eq:homogeneous-extremizers}, with $Z=M^{-1}$.
	
	Conversely, let $f$ have the stated form and translate $c$
	to zero. By convexity and homogeneity,
	\[
	\psi(bx+d)+\psi(ax-d)
	\ge \psi((a+b)x)=\psi(x),
	\qquad x,d\in\R^n.
	\] 
	By a change of variables, it holds that 
	\[
	\begin{aligned}
		g(d)=Z^{-2}\int
		e^{-\psi(bx+d)-\psi(ax-d)}\dd x\leq Z^{-2}\int e^{-\psi(x)}\dd x=Z^{-1}.
	\end{aligned}
	\]
	At $d=0$, equality holds by homogeneity. Hence
	$\|g\|_\infty=Z^{-1}=\|f\|_\infty$.
	This proves equality for every normalized pair, and scaling
	restores arbitrary positive weights.

	For the geometric formulation, set $K=\{\psi\le1\}$.
	This set is closed, convex and contains zero. By homogeneity and full-dimensionality of $\dom\psi$,
	$K$ has nonempty interior; its boundedness follows from
	the exponential bound in Lemma~\ref{lem:integrability}. Then $K$ is a convex body, and $\{\psi\le t\}=tK$
	for every $t>0$ by homogeneity. Hence $\psi$ is the extended-valued Minkowski functional of $K$:
	\[
	\psi(x)=\inf\{t>0:x\in tK\},
	\qquad \inf\varnothing=+\infty.
	\] 
	Conversely, the extended-valued Minkowski functional of any
	convex body containing zero has the stated properties, with
	$Z=n!|K|\in(0,\infty)$ by integration over its sublevel sets. 
\end{IEEEproof} 

\begin{IEEEproof}[Proof of Theorem~\ref{thm:Shannon} and Corollary~\ref{cor:Ruzsa}]
The entropy power inequality \cite{Stam,Blachman,CoverThomas} gives
$N(X-X')\ge N(X)+N(-X')=2N(X)$. Equality requires Gaussian
summands with proportional covariance matrices. Here the two
covariance matrices agree; they are finite and positive definite by
full-dimensionality and Lemma~\ref{lem:integrability}. The upper
bound and its equality cases follow from Theorems~\ref{thm:main}
and \ref{thm:equality}. Equation~\eqref{eq:Ruzsa-definition}
then gives Corollary~\ref{cor:Ruzsa}.
\end{IEEEproof}

\section{Shannon consequences}\label{sec:Shannon-consequences}
\subsection{Signed sums of independent copies}
Yu \cite[Theorem 1 and Lemma 1]{Yu} proved the
corresponding entropy comparison under majorization for real-valued
random variables, using an argument originating in
Karlin and Rinott \cite{KarlinRinott}. For the comparison needed
here, a direct coupling gives the same argument for random vectors.

\begin{corollary}[Signed sums of independent copies]\label{cor:signed-sums}
Let $m\ge1$, and let $X_1,\ldots,X_m$ be independent copies of a
random vector $X$ with a log-concave probability density on $\R^n$.
For real coefficients $c_1,\ldots,c_m$ that are not all zero, set
\[
 \begin{gathered}
 S_c=\sum_{i=1}^m c_iX_i,\qquad
 P=\sum_{c_i>0}c_i,\qquad N=\sum_{c_i<0}(-c_i),\qquad 
 L=P+N=\sum_{i=1}^m|c_i|>0.
 \end{gathered}
\]
For an independent copy $X'$ of $X$,
\begin{equation}\label{eq:signed-sums}
 h(S_c)\le h(PX-NX')\le h(X)+n\log L.
\end{equation}
Equality in the first inequality holds if and only if at most one
coefficient is positive and at most one is negative. Consequently,
$h(S_c)=h(X)+n\log L$ holds precisely when either only one
coefficient is nonzero, or exactly two coefficients are nonzero,
have opposite signs, and the law of $X$ is of the form
\eqref{eq:extremizers}.
\end{corollary}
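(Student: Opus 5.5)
The plan is to reduce the signed sum to a two-term difference by a coupling argument in the spirit of Yu and Karlin--Rinott, and then to apply Theorem~\ref{thm:main} at $p=1$. Since $h(TX)=h(X)+n\log|t|$ for $T=tI$, and since reflection preserves entropy, assume $P>0$. If $N=0$, the middle quantity is read as $h(PX)=h(X)+n\log P$.

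Write $S_c=S_+-S_-$ with $S_+=\sum_{c_i>0}c_iX_i$ and $S_-=\sum_{c_i<0}(-c_i)X_i$, which are independent. The first inequality then reduces to the one-sign claim $h(\sum_i\alpha_iX_i)\le h((\sum_i\alpha_i)X)$ for $\alpha_i>0$, together with a combination step.

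\textbf{One-sign claim.} Set $A=\sum_i\alpha_i$ and $W=\sum_i\alpha_iX_i$. Let $\pi$ be a uniformly random index with $\Pr(\pi=i)=\alpha_i/A$, independent of everything else.

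I would use exchangeability. Conditionally on $W$, each $X_i$ has the same conditional law, and by symmetry $\E[X_i\mid W]=W/A$. Then $AX_\pi$ has law $AX$ and satisfies $\E[AX_\pi\mid W]=W$. In other words, $AX\stackrel{\mathrm d}=W+Z$ with $\E[Z\mid W]=0$.

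Conditional mean-zero noise does not decrease entropy in the log-concave setting. This is the step where Yu's argument is really used. The proof there writes $h(W+Z)\ge h(W+Z\mid\ \cdot\ )$ and combines it with the fact that the density of $W$ is log-concave, via $h(U)\ge-\E\log f_W(U)$ whenever $\E[U\mid W]=W$ in the convex-order sense and $-\log f_W$ is convex. Here $f_W$ is log-concave by Pr\'ekopa--Leindler, and $-\E\log f_W(AX_\pi)\ge-\E\log f_W(W)=h(W)$ follows from Jensen's inequality conditioned on $W$. Gibbs' inequality gives $h(AX)\ge-\E\log f_W(AX)$, and the claim follows.

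\textbf{Combination step.} Apply the same argument in the joint form. Take $S_c$ as $W$ and $PX-NX'$ as $W+Z$, with the random index on each sign chosen independently. The conditional-mean identity $\E[PX_\pi-NX_{\pi'}\mid S_c]=S_c$ holds by exchangeability within each sign class. This gives $h(S_c)\le h(PX-NX')$.

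The second inequality in \eqref{eq:signed-sums} is Theorem~\ref{thm:main} with $a=P$, $b=N$, and $p=1$ (or scaling when $N=0$).

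\textbf{Equality cases.} Equality in Jensen's inequality for $-\log f_W$ requires $-\log f_W$ to be affine on the conditional support of $PX_\pi-NX_{\pi'}$ given $S_c$. Equality in Gibbs' inequality requires the laws of $S_c$ and $PX-NX'$ to coincide.

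If two coefficients of the same sign are present, equality forces equality of the covariances. The covariance of $S_c$ is $(\sum c_i^2)\Sigma$, while that of $PX-NX'$ is $(P^2+N^2)\Sigma$. Since $\Sigma$ is positive definite by Lemma~\ref{lem:integrability}, these differ strictly. Hence there is no equality in that case, which gives the characterization of the first inequality.

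Combining this with Theorem~\ref{thm:equality} (for $N>0$) and with the trivial scaling case (for a single nonzero coefficient) yields the final statement.

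The main obstacle is making the entropy comparison under conditional mean-zero perturbation rigorous for vectors. This requires $\E|\log f_W(AX_\pi)|<\infty$, which the exponential envelope in Lemma~\ref{lem:integrability} supplies.
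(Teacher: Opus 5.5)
There is a genuine gap at the central step: Gibbs' inequality is applied in the wrong direction. For densities $u,v$ one has $-\int u\log v=h(u)+\KL(u\Vert v)\ge h(u)$. So for $U$ with density $u$, $h(U)\le-\E\log f_W(U)$, not $h(U)\ge-\E\log f_W(U)$; the latter would force $U$ to have density $f_W$.

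You take as reference the density $f_W$ of the \emph{smaller} variable $W=\sum_i\alpha_iX_i$. Conditional Jensen then gives $-\E\log f_W(AX_\pi)\ge h(W)$, and Gibbs gives $-\E\log f_W(AX_\pi)\ge h(AX)$. These are two lower bounds on the same cross-entropy and give no comparison between $h(W)$ and $h(AX)$. Your combination step has the same defect, so the first inequality in \eqref{eq:signed-sums} is not established.

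The fix, which is exactly the paper's proof, is to use the potential of the \emph{larger} variable and evaluate it at the smaller one.
\begin{itemize}
\item Set $T=PX_I-NX_J$, with independent random indices $I,J$ chosen with probabilities $c_i/P$ and $-c_j/N$. Then $T\stackrel{\mathrm d}=PX-NX'$ and $\E[T\mid X_1,\ldots,X_m]=S_c$.
\item Let $q$ be the density of $T$, which is log-concave. Put $\Phi=-\log q$; it is convex and bounded below, since $q$ is bounded by Lemma~\ref{lem:integrability}.
\item Conditional Jensen gives $\E\Phi(S_c)\le\E\Phi(T)=h(T)<\infty$.
\item Gibbs, now correctly oriented, gives $h(S_c)+\KL(u\Vert q)=\E\Phi(S_c)$, where $u$ is the density of $S_c$. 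Hence $h(S_c)\le h(T)$.
\item Equality forces $\KL(u\Vert q)=0$, so the two laws coincide. Your covariance comparison then finishes the equality case as you wrote it.
\end{itemize}

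Two smaller corrections. First, the claim that each $X_i$ has the same conditional law given $\sum_i\alpha_iX_i$ is false when the $\alpha_i$ differ. It is also unnecessary: $\E[AX_\pi\mid X_1,\ldots,X_m]=\sum_i\alpha_iX_i$ follows by linearity from the weighted choice of $\pi$. Second, the integrability you actually need is not $\E|\log f_W(AX_\pi)|<\infty$; the exponential envelope only bounds $-\log$ of a density from below. What is needed is the lower bound on $\Phi$ together with $h(T)<\infty$, which makes conditional Jensen valid for the extended-valued convex function $\Phi$.
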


\begin{IEEEproof}
	If $P>0$, choose an index $I$ among the positive coefficients
	with probabilities $c_i/P$. If $N>0$, choose an index $J$
	among the negative coefficients with probabilities $(-c_j)/N$.
	Take the indices independent of each other and of the copies,
	and set $T=PX_I-NX_J$, omitting the corresponding term when
	$P$ or $N$ is zero. Independence of the copies and disjointness of the index sets yield 
	\[
	T\stackrel{\mathrm d}=PX-NX',\qquad
	\E[T\mid X_1,\ldots,X_m]=S_c.
	\]
	
	Let $u$ and $q$ be the densities of $S_c$ and $T$, respectively,
	and put $W=-\log q$, with value $+\infty$ where $q=0$.
	Both densities are log-concave, so their entropies are finite
	by Lemma~\ref{lem:integrability}.
	Since $W$ is convex and bounded below, by conditional Jensen's
	inequality, 
	$\E W(S_c)\le\E W(T)=h(T)<\infty$, 
	which yields $\int u|\log q|<\infty$. Hence
	\[
	h(S_c)+\KL(u\Vert q)=\E W(S_c)\le h(T).
	\]
	which implies the first inequality in \eqref{eq:signed-sums}. The second follows from
	Theorem~\ref{thm:main} when $P,N>0$, and from
	\eqref{eq:affine-entropy} otherwise.
	
	If $h(S_c)=h(T)$, then $\KL(u\Vert q)=0$, so $S_c$ and $T$
	have the same law. Set $v=\E|X-\E X|^2$, which is positive
	and finite by Lemma~\ref{lem:integrability} and full-dimensionality.
	Comparing centered second moments yields 
	\[
	v\sum_{i=1}^m c_i^2=v(P^2+N^2).
	\]
	Since $v>0$, this holds if and only if at most one coefficient
	of each sign is nonzero. Conversely, that condition makes
	$S_c$ and $T$ equal in distribution.
	The final equality assertion follows from
	Theorem~\ref{thm:equality} and \eqref{eq:affine-entropy}.
\end{IEEEproof} 

\subsection{Independent and displacement interpolation}\label{sec:interpolation}
The two-copy inequality also admits a comparison with optimal
transport. Let $\mu$ be the law of $X$, with a log-concave density
on $\R^n$, and let $\check\mu$ be the law of $-X$. 
Let $\pi_{\mathrm{opt}}$ minimize
$\int |x-y|^2\dd\pi(x,y)$ over couplings $\pi$ with marginals
$\mu$ and $\check\mu$. This is the
quadratic optimal transport problem. 
For $0<t<1$, define $ A_t(x,y)=(1-t)x+ty$ and 
\begin{equation*}
	 \mu_t^{\mathrm{ind}}=(A_t)_\#(\mu\otimes\check\mu),\qquad
	\mu_t^{\mathrm{opt}}=(A_t)_\#\pi_{\mathrm{opt}}.
\end{equation*}
These are the laws of $A_t(X,Y)$ under the independent and
optimal couplings, respectively. 
The second curve is the displacement interpolation between
$\mu$ and $\check\mu$. Then
\begin{equation}\label{eq:OT-sandwich}
 h(\mu_t^{\mathrm{ind}})\le h(\mu)\le h(\mu_t^{\mathrm{opt}}).
\end{equation}
Indeed, the first inequality is Theorem~\ref{thm:main} at $p=1$
with $a=1-t$ and $b=t$. For the second, McCann's displacement
convexity theorem \cite[Theorem 2.2]{McCann} applied to
$r\mapsto r\log r$ yields 
\[
 h(\mu_t^{\mathrm{opt}})
 \ge (1-t)h(\mu)+t h(\check\mu)=h(\mu).
\]

Thus independence and quadratic optimality give opposite entropy
comparisons for the same map $A_t$ and the same marginals.
Equality on the left of \eqref{eq:OT-sandwich} is characterized by
Theorem~\ref{thm:equality}. If $\mu$ is centrally symmetric
about $c$, the translation coupling $Y=X-2c$ is optimal,
so the optimal interpolants are translates of $\mu$ and
equality holds on the right. 

The independent coupling need not minimize the entropy of
the interpolant. Indeed, for jointly Gaussian $X,Y$ with
standard normal marginals and correlation $\rho\in(-1,0)$,
the midpoint variance is $(1+\rho)/2<1/2$, giving lower
entropy than the independent midpoint. 

\section{Orders below one}\label{sec:critical}
Finally, we turn to orders $0<p<1$ and prove
Theorem~\ref{thm:critical}. By \eqref{eq:affine-entropy}, it suffices to consider the standard simplex. Let $\Sigma_n=\{x\in[0,\infty)^n:\sum_i x_i\le1\}$
be the standard $n$-dimensional simplex, and let $U,U'$
be independent and uniformly distributed on $\Sigma_n$. Let $f_n=|\Sigma_n|^{-1}\one_{\Sigma_n}$ be the density
of $U$, and let $g_n$ be the density of $U-U'$. We compute $g_n$ through intersection volumes; see also \cite{RS,AGGJV}. 

Write $r_+=\max\{r,0\}$ for the positive part of a real
number $r$. For $z=(z_1,\ldots,z_n)\in\R^n$, set
\[
\rho(z)=\max\left\{\sum_i(z_i)_+,\ \sum_i(-z_i)_+\right\}.
\]
For $\rho(z)<1$, translating $\Sigma_n\cap(z+\Sigma_n)$
by $-((z_i)_+)_i$ gives $(1-\rho(z))\Sigma_n$;
otherwise, the intersection has zero volume.
The convolution identity then yields 
\begin{equation}\label{eq:simplex-density}
	\begin{aligned}
		g_n(z)=(f_n*\widetilde f_n)(z)
		=\frac{|\Sigma_n\cap(z+\Sigma_n)|}{|\Sigma_n|^2}=\frac1{|\Sigma_n|}(1-\rho(z))_+^n.
	\end{aligned}
\end{equation} 
The set $\{\rho\le1\}$ is $\Sigma_n-\Sigma_n$.
Since $\Sigma_n$ is a simplex, equality in
\eqref{eq:classical-RS} gives
\begin{equation}\label{eq:simplex-volume}
	|\Sigma_n-\Sigma_n|=\binom{2n}{n}|\Sigma_n|.
\end{equation}

Let $\Gamma(s)=\int_0^\infty t^{s-1}e^{-t}\dd t$, $s>0$,
denote Euler's gamma function. For $r>0$, by homogeneity, it holds that 
$|\{\rho\le r\}|=r^n|\Sigma_n-\Sigma_n|$. 
Since $\int f_n^p=|\Sigma_n|^{1-p}$, radial integration
in \eqref{eq:simplex-density}, together with
\eqref{eq:simplex-volume}, yields, for every $p>0$,
\begin{equation}\label{eq:simplex-moment}
	\begin{aligned}
		\frac{\int g_n^p}{\int f_n^p}=\binom{2n}{n}\,n\int_0^1
		r^{n-1}(1-r)^{np}\dd r=\frac{\Gamma(2n+1)\Gamma(np+1)}
		{\Gamma(n+1)\Gamma(n(p+1)+1)}, 
	\end{aligned}
\end{equation}
where the last equality follows from the beta integral formula. 

\begin{IEEEproof}[Proof of Theorem~\ref{thm:critical}]
For fixed $0<p<1$, applying Stirling's formula in
\eqref{eq:simplex-moment}, we obtain 
\begin{equation}\label{eq:simplex-limit}
 \begin{aligned}
 \lim_{n\to\infty}\frac{h_p(U-U')-h_p(U)}n=\frac{2\log 2+p\log p-(p+1)\log(p+1)}{1-p}.
 \end{aligned}
\end{equation}
The limit in \eqref{eq:simplex-limit} exceeds $\log 2$, proving \eqref{eq:failure} for all sufficiently large $n$. 
\end{IEEEproof} 

\begin{remark}
	In particular, at $p=1/2$, the bound already fails in dimension three.
	Indeed, \eqref{eq:simplex-moment} with $n=3$ gives
	\[
	h_{1/2}(U-U')-h_{1/2}(U)
	=2\log\frac{64}{21}>3\log 2.
	\]
\end{remark} 

\section{Open problems}\label{sec:open-problems}
We distinguish questions suggested by the present results from
two further directions proposed to us by Jiange Li after the
difference inequality was obtained: positive sums and
integer-valued log-concave laws. Some related problems and partial
results already occur in the literature, as indicated below.

\subsection{Direct extensions}\label{sec:open-extensions}
\noindent\emph{Two different one-dimensional laws.}
For independent real-valued random variables $X,Y$ with log-concave
densities, is it true that
\begin{equation}\label{eq:open-two-laws}
 e^{h(X-Y)}\le e^{h(X)}+e^{h(Y)}?
\end{equation}
Since reflection preserves entropy and log-concavity, this is
equivalent to asking whether the same bound holds for $X+Y$.
Theorem~\ref{thm:main} proves \eqref{eq:open-two-laws} for positive
dilations and translations of a common law, with equality when
the common law is a one-sided exponential law.

This question is related to the entropy-norm conjecture of
Ball, Nayar and Tkocz \cite[Conjecture 1]{BNT}, which assumes
joint central symmetry and allows dependence. Here independence
is assumed, while no symmetry is required.
The higher-dimensional analogue, with $e^{h(\cdot)/n}$ in place
of $e^{h(\cdot)}$, already fails in $\R^2$. Indeed, independent
centered Gaussian vectors with covariance matrices
$\operatorname{diag}(1,\varepsilon^2)$ and
$\operatorname{diag}(\varepsilon^2,1)$ give a counterexample
for sufficiently small $\varepsilon>0$. 

\medskip
\noindent\emph{Orders below one in a fixed dimension.}
For $n\ge1$ and $0<p<1$, define
\begin{equation}\label{eq:open-Cnp}
 C_{n,p}=\sup_X\{h_p(X-X')-h_p(X)\},
\end{equation}
where the supremum is over log-concave probability densities on
$\R^n$ and $X'$ is an independent copy of $X$. Exponential products
and the entropy comparisons in \cite{BM11,FMW} give
\[
 n\log 2\le C_{n,p}\le n\frac{\log p}{p-1}.
\]
What is $C_{n,p}$, and which laws attain or approach it?
Theorem~\ref{thm:critical} makes the lower bound strict for each
fixed $p<1$ in sufficiently high dimension. For a fixed $n$, do the
orders satisfying $C_{n,p}=n\log 2$ form an interval adjacent to $1$?

\subsection{Positive sums and discrete laws}\label{sec:open-sums-discrete}
\noindent\emph{Positive sums.}
For independent copies $X,X'$ of a log-concave random vector in
$\R^n$, Fu and Li \cite[Conjecture 1.1]{FuLi} state the folklore
conjecture
\begin{equation}\label{eq:open-positive-sum}
 h(X+X')-h(X)\le n\gamma_{\!E},
\end{equation}
where $\gamma_{\!E}=\lim_{m\to\infty}(\sum_{k=1}^m k^{-1}-\log m)$
is the Euler--Mascheroni constant. For independent standard
exponential variables $E,E'$, direct integration gives
$h(E+E')-h(E)=\gamma_{\!E}$; hence the affine images of exponential
products in \eqref{eq:extremizers} are candidate extremizers.
Fu and Li \cite[Theorem 3.4]{FuLi} prove the sharp one-dimensional
min-entropy analogue. Since $\gamma_{\!E}<\log 2$, the conjectured
Shannon bound improves on Corollary~\ref{cor:signed-sums}.

\medskip
\noindent\emph{Integer-valued log-concave laws.}
An integer-valued random variable $X$ with probability mass
function $r_k=\mathbb P(X=k)$ is called log-concave if
$\{k\in\mathbb Z:r_k>0\}$ is an interval of consecutive integers
and $r_k^2\ge r_{k-1}r_{k+1}$ for every $k\in\mathbb Z$;
see \cite[Definitions 2.3 and 2.4]{MPC} and
\cite[Section 4]{FuLi}.
Its discrete Shannon entropy is
$H(X)=-\sum_{k\in\mathbb Z}r_k\log r_k$. For independent copies $X,X'$ with such a law, what are the
sharp upper bounds on $H(X-X')-H(X)$ and $H(X+X')-H(X)$?
In particular, does
\begin{equation}\label{eq:open-discrete-difference}
	H(X-X')-H(X)\le\log 2
\end{equation}
hold for every such law?
For independent geometric variables $Z_q,Z_q'$ with probability
mass $(1-q)q^k$ at $k=0,1,\ldots$, where $0<q<1$,
direct summation yields 
\[
H(Z_q-Z_q')-H(Z_q)
=\log(1+q)-\frac{q}{1+q}\log q
\uparrow\log 2
\quad(q\uparrow1), 
\]
which implies the constant $\log 2$ is optimal if \eqref{eq:open-discrete-difference} holds. Melbourne and Palafox-Castillo \cite[Theorem 2.11]{MPC} prove the corresponding difference bound for R\'enyi orders
$p\ge2$ when the probability masses are monotone on their support. Fu and Li \cite[Theorem 4.3]{FuLi} prove the sharp min-entropy bound for positive sums under the same assumption. The questions above concern Shannon entropy for general
log-concave masses. 

\subsection{Stability}\label{sec:open-stability}
For fixed dimension and positive normalized weights, can the two
nonnegative terms in the Shannon deficit of
Theorem~\ref{thm:information} control, after an invertible affine
normalization, the deviation of the potential from an affine
function and the deviation of its support from a simplicial cone?
Such bounds would quantify the equality classification for
exponential products.

\section*{Acknowledgment}
The authors thank Jiange Li for valuable discussions and suggestions,
in particular for proposing the positive-sum and integer-valued
log-concave directions after the difference inequality had been
obtained, and for informing them of his joint work with Zhen Fu.
Zhuo-Nan Zhu thanks Xiang-Dong Li for the talk at the University of
Science and Technology of China that introduced him to the
entropy-difference problem.

\medskip
\noindent\emph{Use of generative AI.}
OpenAI's ChatGPT and Codex, and Moonshot AI's Kimi, were used extensively during
research and writing for mathematical exploration, testing
candidate proof strategies and counterexamples, assistance with
derivations and proof details, literature searches, and drafting
and revision throughout
Sections~\ref{sec:introduction}--\ref{sec:open-problems}.
The motivation, central ideas and proof framework for the main
theorems were developed by the authors. AI-generated suggestions
were evaluated and revised during preparation of the manuscript.
The authors have reviewed and verified all mathematical arguments
in the final manuscript and take full responsibility for its
mathematical content, references, interpretations and presentation.

\end{document}